\title[Inflow-outflow via analyticity]{The inviscid inflow-outflow problem via analyticity}
\author[I.~Kukavica]{Igor Kukavica}
\address{Department of Mathematics, University of Southern California, Los Angeles, CA 90089}
\email{kukavica@usc.edu}
\author[W.~S.~O\.za\'nski]{Wojciech O\.za\'nski}
\address{Department of Mathematics, Florida State University, Tallahassee, FL 32301}
\email{wozanski@fsu.edu}
\author[M.~Sammartino]{Marco Sammartino}
\address{Department of Mathematics, Via Archirafi, 34 Palermo Italy}
\email{marco.sammartino@unipa.it}
\chardef\forshowkeys=0
  \chardef\refcheck=0
  \chardef\showllabel=0
  \chardef\sketches=0
\begin{document}
\def\YY{X}
\def\na{\nabla }
\def\OO{\mathcal O}
\def\SS{\mathbb S}
\def\CC{\mathbb C}
\def\RR{\mathbb R}
\def\TT{\mathbb T}
\def\ZZ{\mathbb Z}
\def\HH{\mathbb H}
\def\RSZ{\mathcal R}
\def\LL{\mathcal L}
\def\SL{\LL^1}
\def\ZL{\LL^\infty}
\def\GG{\mathcal G}
\def\tt{\langle t\rangle}
\def\erf{\mathrm{Erf}}
\def\mgt#1{\textcolor{magenta}{#1}}
\def\ff{\rho}
\def\gg{G}
\def\sqrtnu{\sqrt{\nu}}
\def\ww{w}
\def\ft#1{#1_\xi}
\def\lec{\lesssim}
\def\gec{\gtrsim}
\renewcommand*{\Re}{\ensuremath{\mathrm{{\mathbb R}e\,}}}
\renewcommand*{\Im}{\ensuremath{\mathrm{{\mathbb I}m\,}}}
\ifnum\showllabel=1
 \def\llabel#1{\marginnote{\color{lightgray}\rm\small(#1)}[-0.0cm]\notag}
%%%%%%%%%%%%%%%%%%%\def\llabel#1{\label{#1}}
%  \reversemarginpar
  %\def\llabel#1{\notag}
%  \def\llabel#1{\label{#1}}
\else
% % \def\llabel#1{\nonumber}
 \def\llabel#1{\notag}
\fi

\newcommand{\eqnb}{\begin{equation}}
\newcommand{\eqne}{\end{equation}}

\newcommand{\norm}[1]{\left\|#1\right\|}
\newcommand{\nnorm}[1]{\lVert #1\rVert}
\newcommand{\abs}[1]{\left|#1\right|}
\newcommand{\NORM}[1]{|\!|\!| #1|\!|\!|}

\newtheorem{theorem}{Theorem}[section]
      \newtheorem{lemma}[theorem]{Lemma}
      \newtheorem{prop}[theorem]{Proposition}
      \newtheorem{corollary}[theorem]{Corollary}
      \newtheorem{definition}[theorem]{Definition}
      \newtheorem{examp}[theorem]{Example}
      \newtheorem{conj}[theorem]{Conjecture}
      \newtheorem{remark}[theorem]{Remark}

\def\theequation{\thesection.\arabic{equation}}
\numberwithin{equation}{section}
%%%%%%%%%%%%%%%%%%%%%%%%%%
\definecolor{mygray}{rgb}{.6,.6,.6}
\definecolor{myblue}{rgb}{9, 0, 1}
\definecolor{colorforkeys}{rgb}{1.0,0.0,0.0}
\newlength\mytemplen
\newsavebox\mytempbox
\makeatletter
\newcommand\mybluebox{%
    \@ifnextchar[%]
       {\@mybluebox}%
       {\@mybluebox[0pt]}}
\def\@mybluebox[#1]{%
    \@ifnextchar[%]
       {\@@mybluebox[#1]}%
       {\@@mybluebox[#1][0pt]}}
\def\@@mybluebox[#1][#2]#3{
    \sbox\mytempbox{#3}%
    \mytemplen\ht\mytempbox
    \advance\mytemplen #1\relax
    \ht\mytempbox\mytemplen
    \mytemplen\dp\mytempbox
    \advance\mytemplen #2\relax
    \dp\mytempbox\mytemplen
    \colorbox{myblue}{\hspace{1em}\usebox{\mytempbox}\hspace{1em}}}
\makeatother
%%%%%%%%%%%%%%%%%%%%%%%%%%
%Igor's macros  varmac
%\def\bil{#1 \nabla #2}
\def\bnew{\colr }
\def\enew{\colb }
\def\bold{\colu }
\def\eold{\colb }
\def\phiij{\phi_{ij}}
\def\un{u^{(n)}}
\def\Bn{B^{(n)}}
\def\unp{u^{(n+1)}}
\def\unm{u^{(n-1)}}
\def\Bnp{B^{(n+1)}}
\def\Bnm{B^{(n-1)}}
\def\pn{p^{(n)}}
\def\pnm{p^{(n-1)}}
\def\eeo{\tilde \epsilon}
\def\eet{\bar\epsilon}
\def\us{u_{\text S}}
\def\rr{r}
\def\weaks{\text{\,\,\,\,\,\,weakly-* in }}
\def\inn{\text{\,\,\,\,\,\,in }}
\def\cof{\mathop{\rm cof\,}\nolimits}
\def\Dn{\frac{\partial}{\partial N}}
\def\Dnn#1{\frac{\partial #1}{\partial N}}
\def\tdb{\tilde{b}}
\def\tda{b}
\def\qqq{u}
\def\lat{\Delta_2}
\def\biglinem{\vskip0.5truecm\par==========================\par\vskip0.5truecm}
\def\inon#1{\hbox{\ \ \ \ \ \ \ }\hbox{#1}}                %in or on
\def\onon#1{\inon{on~$#1$}}
\def\inin#1{\inon{in~$#1$}}
\def\FF{F}
\def\andand{\text{\indeq and\indeq}}
\def\ww{w(y)}
\def\ee{\mathrm{e}}
\def\startnewsection#1#2{\newpage \section{#1}\label{#2}\setcounter{equation}{0}}   
%\starts a new section\def\dist{\mathop{\rm dist}\nolimits}
\def\nnewpage{ }
\def\sgn{\mathop{\rm sgn\,}\nolimits}    
\def\Tr{\mathop{\rm Tr}\nolimits}    
\def\div{\mathop{\rm div}\nolimits}
\def\curl{\mathop{\rm curl}\nolimits}
\def\dist{\mathop{\rm dist}\nolimits}  
\def\supp{\mathop{\rm supp}\nolimits}
\def\indeq{\quad{}}           
\def\period{.}                       
\def\semicolon{\,;}                  
\def\cmi#1{\underline{{\color{cyan} \underline{IK: #1}\colb}}}
\def\cmw#1{{\colr \bf WO: #1\colb}}
\def\colr{\color{red}}
\def\coli{\color{cyan}}
\def\colrr{\color{black}}
\def\colb{\color{black}}
\def\coly{\color{lightgray}}
\definecolor{colorgggg}{rgb}{0.1,0.5,0.3}
\definecolor{colorllll}{rgb}{0.0,0.7,0.0}
\definecolor{colorhhhh}{rgb}{0.3,0.75,0.4}
\definecolor{colorpppp}{rgb}{0.7,0.0,0.2}
%\definecolor{coloroooo}{rgb}{0.9,0.4,0}
\definecolor{coloroooo}{rgb}{0.45,0.0,0.0}
\definecolor{colorqqqq}{rgb}{0.1,0.7,0}
\def\colg{\color{colorgggg}}
\def\collg{\color{colorllll}}
\def\coleo{\color{colorpppp}}
\def\cole{\color{coloroooo}}
\def\cole{\color{black}}
\def\colu{\color{blue}}
\def\colc{\color{colorhhhh}}
\def\colW{\colb}   %color for weight
\definecolor{coloraaaa}{rgb}{0.6,0.6,0.6}%%%out
\def\colw{\color{coloraaaa}}
\def\comma{ {\rm ,\qquad{}} }            
\def\commaone{ {\rm ,\quad{}} }          
\def\nts#1{{\color{red}\hbox{\bf ~#1~}}}
\def\ntsik#1{{\text{\color{red}IK:\hbox{\bf ~#1~}}}}
\def\ntsf#1{\footnote{\color{colorgggg}\hbox{#1}}} 
\def\blackdot{{\color{red}{\hskip-.0truecm\rule[-1mm]{4mm}{4mm}\hskip.2truecm}}\hskip-.3truecm}
\def\bluedot{{\color{blue}{\hskip-.0truecm\rule[-1mm]{4mm}{4mm}\hskip.2truecm}}\hskip-.3truecm}
\def\purpledot{{\color{colorpppp}{\hskip-.0truecm\rule[-1mm]{4mm}{4mm}\hskip.2truecm}}\hskip-.3truecm}
\def\greendot{{\color{colorgggg}{\hskip-.0truecm\rule[-1mm]{4mm}{4mm}\hskip.2truecm}}\hskip-.3truecm}
\def\cyandot{{\color{cyan}{\hskip-.0truecm\rule[-1mm]{4mm}{4mm}\hskip.2truecm}}\hskip-.3truecm}
\def\reddot{{\color{red}{\hskip-.0truecm\rule[-1mm]{4mm}{4mm}\hskip.2truecm}}\hskip-.3truecm}
\def\tdot{{\color{green}{\hskip-.0truecm\rule[-.5mm]{6mm}{3mm}\hskip.2truecm}}\hskip-.1truecm}
\def\gdot{\greendot}
\def\bdot{\bluedot}
\def\pdot{\purpledot}
\def\ydot{\cyandot}
\def\rdot{\cyandot}
\def\fractext#1#2{{#1}/{#2}}
\def\ii{\hat\imath}
%%%%%%%
\def\fei#1{\textcolor{blue}{#1}}
\def\vlad#1{\textcolor{cyan}{#1}}
\def\igor#1{\text{{\textcolor{colorqqqq}{#1}}}}
\def\igorf#1{\footnote{\text{{\textcolor{colorqqqq}{#1}}}}}
%varmac
%\def\AA{Y}
\def\Sl{S_{\text l}}
\def\Sll{\bar S_{\text l}}
\def\Sh{S_{\text h}}
\newcommand{\p}{\partial}
\newcommand{\wa}{\widetilde{\alpha}}
\newcommand{\os}{{\overline{S}}}
\newcommand{\oos}{{\overline{\overline{S}}}}
\newcommand{\low}{\mathrm{l}}
\newcommand{\high}{\mathrm{h}}
\newcommand{\UE}{U^{\rm E}}
\newcommand{\PE}{P^{\rm E}}
\newcommand{\KP}{K_{\rm P}}
\newcommand{\uNS}{u^{\rm NS}}
\newcommand{\vNS}{v^{\rm NS}}
\newcommand{\pNS}{p^{\rm NS}}
\newcommand{\omegaNS}{\omega^{\rm NS}}
\newcommand{\uE}{u^{\rm E}}
\newcommand{\vE}{v^{\rm E}}
\newcommand{\pE}{p^{\rm E}}
\newcommand{\omegaE}{\omega^{\rm E}}
\newcommand{\ua}{u_{\rm   a}}
\newcommand{\ou}{\overline{u}}
\newcommand{\oo}{\overline{\omega}}
\newcommand{\vn}{v^{(n)}}
\newcommand{\vnp}{v^{(n+1)}}
\newcommand{\vnm}{v^{(n-1)}}
\newcommand{\va}{v_{\rm   a}}
\newcommand{\omegaa}{\omega_{\rm   a}}
\newcommand{\ue}{u_{\rm   e}}
\newcommand{\ve}{v_{\rm   e}}
\newcommand{\omegae}{\omega_{\rm e}}
\newcommand{\omegaeic}{\omega_{{\rm e}0}}
\newcommand{\ueic}{u_{{\rm   e}0}}
\newcommand{\veic}{v_{{\rm   e}0}}
\newcommand{\vp}{v^{\rm P}}
\newcommand{\tup}{{\tilde u}^{\rm P}}
\newcommand{\bvp}{{\bar v}^{\rm P}}
\newcommand{\omegap}{\omega^{\rm P}}
\newcommand{\tomegap}{\tilde \omega^{\rm P}}
\renewcommand{\vp}{v^{\rm P}}
\renewcommand{\omegap}{\Omega^{\rm P}}
\renewcommand{\tomegap}{\omega^{\rm P}}
%$\up$ $\vp$ $\omegap$
%**end of header
\renewcommand{\d}{\mathrm{d}}
\newcommand{\R}{\mathbb{R}}
\newcommand{\N}{\mathbb{N}}
\renewcommand{\tt}{{\widetilde{\tau}}}

\newcommand{\tX}{{\widetilde{X}}}
\newcommand{\tY}{{\widetilde{Y}}}
\newcommand{\oY}{{\overline{Y}}}

\begin{abstract}
We consider the incompressible Euler equations on an analytic domain $\Omega $ with nonhomogeneous boundary condition $u\cdot \mathsf{n} = \ou \cdot \mathsf{n}$ on $\p \Omega$, where $\ou$ is a given
 divergence-free 
analytic vector field. We establish local well-posedness for $u$ in analytic spaces without any compatibility conditions in
 all space dimensions. We also prove the global well-posedness in the $2$D case if $\overline{u}$ decays in time sufficiently fast.
\end{abstract}
\colb
%\keywords{}
%AMS Classification:
%35Q31  	Euler equations
%35A20  	Analyticity in context of PDEs
%35L20  	Initial-boundary value problems for second-order hyperbolic equations
%35A01  	Existence problems for PDEs: global existence, local existence, non-existence

\maketitle
\setcounter{tocdepth}{2}
%\tableofcontents
\section{Introduction}\label{sec_intro}
In this paper, we are concerned with the inflow-outflow problem for
inviscid incompressible flows.
Namely, we fix an analytic domain
$\Omega \subset \R^d$ (see Section~\ref{sec_prelims} for the 
definition) and an analytic divergence-free  vector field
$\ou \colon [0,T_0] \times \Omega \to \RR^d $, where $T_0>0$, and we seek a
solution  to the Euler equations with the prescribed normal component of
the velocity on $\p \Omega$,
 \eqnb\label{intro_euler}
  \begin{split}
   \partial_{t} u + u\cdot \nabla u + \nabla p &= 0,\\
    \nabla \cdot u
    &=  0\hspace{0.45cm}    \inon{in $\Omega$}, 
    \\
    u\cdot \mathsf{n} &= \ou \cdot \mathsf{n}
    \inon{on $\partial\Omega$}
   ,
  \end{split}
  \eqne
where $\mathsf{n}$ represents an outward unit normal vector,
with the initial condition $u(0)=u_0$.  Formally,  the pressure $p$ in \eqref{intro_euler} satisfies the nonhomogeneous elliptic system
\eqnb\label{p_system}
\begin{split} -\Delta p &= \sum_{i,j} \p_i u_j \p_j u_i\qquad \text{ in } \Omega,\\
\p_{\mathsf{n}} p &= - (u\cdot \nabla  u )\cdot \mathsf{n}-\p_t \ou \cdot \mathsf{n} \qquad \text{ on } \p \Omega
.
\end{split}
\eqne
We note that the inflow-outflow problems arise in many physical situations.
Several examples include aerodynamics, where we consider the flow of air around a wing or a rocket, environmental flows, where we model flows of rivers into lakes, and blood flow, where  the inflow-outflow condition is time-dependent.
Also, if the domain containing a fluid is moving, we arrive naturally at the inflow-outflow problem~\cite[p.~5]{MP}.

The problem \eqref{intro_euler} was initially studied by Zaj\c{a}czkowski \cite{Z1,Z2,Z3,Z4}, Antontsev, Kazhikov, and Monakhov~\cite{AKM1,AKM2},
 then by Petcu in~\cite{P}, 
and more recently by Gie, Kelliher, and Mazzucato in~\cite{GKM1,GKM2}. It is known to become very difficult as soon as $\ou \cdot \mathsf{n} \ne 0$.
This can already be observed at the Sobolev level. Namely, set
  \begin{equation}
   v\coloneqq u - \ou
   ,
   \llabel{EQ11}
  \end{equation}
in which case the Euler equations \eqref{intro_euler} become
 \eqnb\label{intro_euler_v}
  \begin{split}
   \partial_{t} v + v\cdot \nabla v + \ou \cdot\na  v + v \cdot \na \ou + \ou \cdot\na  \ou  + \nabla p &= -\p_t \ou ,\\
    \nabla \cdot v
   & =  0
    \inon{in $\Omega$},
    \\
    v\cdot \mathsf{n} &= 0
    \inon{on $\partial\Omega$}
   ,
  \end{split}
  \eqne
with the initial condition $v(0)= v_0$. Given a multiindex $\alpha$, we apply $\p^\alpha$ to the Euler equations~\eqref{intro_euler_v}, multiply by $\p^\alpha v $, and integrate to obtain the standard estimate on the time evolution of $ \sum_{|\alpha|=k} \| \p^{\alpha } v\|_{L^2}$. One of the terms appearing on the right-hand side of such an estimate is 
\begin{equation}
\int_\Omega (\ou \cdot \na \p^\alpha v) \cdot \p^\alpha v
= \frac12 \int_{\p \Omega } | \p^\alpha v |^2 \ou \cdot \mathsf{n} ,
   \label{EQ17}
     \end{equation}
which  can be bounded by
\eqnb\label{der_loss}
\| \ou  \|_\infty \| \p^\alpha v \|_{L^2(\p \Omega )}^2  \lec \| \ou  \|_\infty
\| \p^\alpha v \|_{L^2(\Omega )}
\| \p^\alpha v \|_{H^{1}(\Omega )}
.
\eqne
%\enew\bold
%\eqnb\label{der_loss OLD}
%\| \ou  \|_\infty \| \p^\alpha v \|_{L^2(\p \Omega )}^2  \lec \| \ou  \|_\infty \| \p^\alpha v \|_{H^{1/2}(\Omega )}^2  .
%\eqne
%\eold
The expression on the right-hand side is problematic since, to estimate the derivative of order $|\alpha |$, we need to control a derivative of the higher order~$|\alpha|+1$. 

To understand the problem better, we note that one may use the Gromeka-Lamb form of the Euler equations~\eqref{intro_euler} to show that the identities
\eqnb\label{cc}
\begin{split}
u^\mathsf{n} \omega^\mathsf{t} &= \left(-\p_t u^\mathsf{t} - \nabla_{\p \Omega } \left( p + \frac{|u|^2}2 \right)  \right)^\perp - u^\mathsf{t} \mathrm{curl}_{\partial \Omega } u^\mathsf{t} ,\\
\omega^\mathsf{n} &= \mathrm{curl}_{\p \Omega } u^\mathsf{t}
\end{split}
\eqne
must hold on $\p \Omega$, where $\nabla_{\p \Omega}$, $\mathrm{curl}_{\p \Omega }$ denote appropriate tangential derivatives on $\p \Omega$, and $v^\mathsf{n} \coloneqq v\cdot \mathsf{n}$, $v^\mathsf{t} \coloneqq v - v^\mathsf{n} \mathsf{n}$ denote the normal and tangential components, respectively, of a vector field $v$, and $v^\perp\coloneqq \mathsf{n} \times v$; see~\cite[Proposition~3.1]{GKM2} for details. Following observations in \cite{GKM2}, we note that \eqref{p_system} and \eqref{cc} appear to give, roughly speaking, 
\eqnb\label{issue}
\text{two relations among four quantities }\omega, \nabla_{\p \Omega}p, u^\mathsf{n} , u^\mathsf{t}, 
\eqne
which suggests that, in order to obtain any local well-posedness of \eqref{intro_euler} one must find, independently, a way of determining two relations, so that \eqref{p_system} and \eqref{cc} provide the remaining two.
This is related to the fact that, on the ``inflow portion'' of $\p \Omega$, the incoming vorticity $\omega$ must lie in the range of $\mathrm{curl}$ and that $\mathrm{div}\,\omega$ must remain conserved (see \cite[(1.6)]{GKM2}). One way of dealing with this issue is to impose an additional boundary condition on the entire velocity field $u$ on the inflow portion $\p \Omega_{\rm in}$ of the boundary~$\p \Omega$. This problem was considered by \cite[Theorem~1.2]{GKM2}, who proved the local well-posedness in $C^{N+1,\alpha}$ ($\alpha \in (0,1)$) in the case of unsteady $\ou$, provided a certain compatibility condition, involving the $N$-th and ($N+1$)-st time derivatives, holds on $\p \Omega_{\rm in}$ at~$t=0$. 

Another way of dealing with the above issue is to prescribe $\omega$ on $\p \Omega_{\rm in}$---the so-called \emph{vorticity boundary conditions}; see~\cite{Z1} as well as ~\cite[Theorem~1.3]{GKM2}. We also refer the reader to \cite{Z2} where a boundary condition for pressure is assumed on the outflow part of the boundary, to \cite{Z2,Z3,Z4} for results concerned with domains with corners, to \cite{GKM2} for an extensive introduction to the problem, as well as to the classical work \cite{AKM2} on the subject.
We point out that the existence and uniqueness problem when the vorticity is prescribed at the
inflow portion of the inflow-outflow Euler problem remains open and
not well-understood. Also, the problem increases in complexity in higher 
regularity Sobolev spaces.

In this paper, we prove the unique solvability of the
inflow-outflow problem when the data are analytic,
however without any additional condition to \eqref{intro_euler}$_3$. This is possible since analytic data can be uniquely extended to a neighborhood of $\Omega$, and thus the compatibility conditions can be bypassed for a short time.
We emphasize that the inflow and outflow regions do not need to be
separated, and they can even move over time as the function $\bar u$ in \eqref{intro_euler}$_3$
is time-dependent.
Our approach works equally in all space dimensions, two or
higher, although we present full details in the space dimension two,
pointing out the adjustments for the dimensions three or higher. 

To describe our approach, consider the analytic norms
 \begin{align}
  \begin{split}
   &
   \Vert u\Vert_{X(\tau)}
   \coloneqq 
  \sum_{i+j\geq r } c_{ij} 
    \Vert \p^i T^j  u\Vert , \qquad \text{ where } c_{ij} \coloneqq \frac{(i+j)^r}{(i+j)!}\tau^{i+j-r }
  \overline{\epsilon}^i \epsilon^j,
    \\&
   \Vert u\Vert_{\tX(\tau)}
   \coloneqq 
   \Vert u\Vert_{X(\tau)}
   + \Vert u\Vert_{H^{r}}
  \\&
   \Vert u\Vert_{Y(\tau)}
   \coloneqq 
   \sum_{ i+j\geq r+1}
    \frac{i+j}{
     \tau
    }c_{ij}
    \Vert \p^i T^j  u\Vert
    \\&
   \Vert u\Vert_{\widetilde{Y}(\tau)}
   \coloneqq 
  \tau  \Vert u\Vert_{Y(\tau)}
   + \Vert u\Vert_{H^{r}} \\&
   \Vert u\Vert_{\oY (\tau)}
   \coloneqq 
  \Vert u\Vert_{Y(\tau)}
   + \Vert u\Vert_{H^{r}},
  \end{split}
   \label{EQ19c}
  \end{align}
  where $r\geq 3$ is fixed, and where we use the notation
\begin{equation}
\| \cdot \| \coloneqq \| \cdot \|_{L^2 (\Omega )}
   \label{EQ18}
     \end{equation}
for the $L^2$ norm. Here, $\tau (t)$ denotes a time-dependent analyticity radius.  The choice of $\tau (t)$ is the main degree of freedom in the analytic approach, and an appropriate choice of $\tau(t)$ provides us with the \emph{analytic~dissipation}. This in turn allows us to overcome a derivative loss; see~\eqref{002} and the discussion that follows. 
If the choice of the analyticity radius $\tau$ is clear from the context, we use a short-hand notation
\begin{equation}
\| \cdot \|_{\widetilde{X} } \equiv \| \cdot \|_{\widetilde{X}(\tau )},\qquad \| \cdot \|_{Y } \equiv \| \cdot \|_{{Y}(\tau )},\qquad \| \cdot \|_{\overline{Y}} \equiv  \| \cdot \|_{Y(\tau )} + \| \cdot \|_{H^r}
.
   \label{EQ20}
     \end{equation}

The tangential operator $T$ is a technical device required to handle the case of a nonflat boundary $\p \Omega$; we refer the reader to Section~\ref{sec3} for the definition of tangential operators $T$ and for the notation~$\Vert \p^i T^j  u\Vert$. The constants $\epsilon$, $\overline{\epsilon }>0$ are used for absorbing lower order commutator terms as well as tangential reduction arguments to transfer normal derivatives to tangential ones; see \eqref{transfer}, for example.  We suppose that ${\epsilon}$ and $\overline{\epsilon}/\epsilon$ are sufficiently small positive numbers, as required by Lemma~\ref{L_pressure_curved} below. The number $\overline{\epsilon} /\epsilon$ is related to the quotient of the normal and tangential analyticity radii, which we require to be bounded by a small constant. We discuss the spaces $X,\tX,Y,\tY,\oY$ in more detail below Theorem~\ref{T01}, but we note at this point that they were introduced in \cite{CKV,KV1,KV2}; see also  \cite{K1,K2} for the non-summed norms and the Navier-Stokes setting. We also note that
  \eqnb \label{grad_in_X_is_Y}
  \|\na u \|_{\tX } \lec \| u \|_{\oY}.
  \eqne
We now define the notion of a solution in analytic spaces.

\begin{definition}
\label{D_sol}
{\rm
Given $T_0 >0$ and $\tau\colon[0,T_0 ]\to(0,\infty)$, a function
$v\colon[0,T_0 ]\to \tX(\inf_{t\in[0,T_0]}\tau(t))$  is a solution of the system \eqref{intro_euler_v}
on some interval $[0,T_0]$  if:\\
(i) $v\in L^{\infty}([0,T_0],\tX(\tau))\cap L^{1}([0,T_0],\tY(\tau))$, 
\\
(ii) there exists $p\in L^{1}([0,T_0],\tX(\tau))$
such that $\nabla p\in L^{1}([0,T_0],\tX(\tau))$ and
  \begin{equation}
   v(t)
   = u_0 - \ou (t)
    - \int_{0}^{t} (v\nabla v + \ou \cdot\na  v + v \cdot \na \ou + \ou \cdot\na  \ou  + \nabla p
                     - \partial_{t}\ou
                   ) \,\d s
\comma
%\qquad \text{ for all }
t\in [0,T_0],
   \label{EQ01}
  \end{equation}
(iii) the second and third equations in \eqref{intro_euler_v} hold.
}
\end{definition}
We assume that the background velocity field $\ou$ satisfies
\eqnb\label{ou_requirements}
\| \ou \|_{\tX(\tau_0)} + \| \ou \|_{\oY (\tau_0)} +
\| \p_t \ou \|_{\tX (\tau_0)}<\infty
,
%    \comma t\in[0,1],
\eqne
for some $\tau_0\in(0,1]$.

Throughout, we assume, 
without loss of generality,
that $\int_\Omega p =0$ holds for all times. %We can now state our main result.
The following is our main result on the local well-posedness of the inflow-outflow problem.

\cole
\begin{theorem}[Local well-posedness of \eqref{intro_euler_v} in analytic spaces]\label{T01}
Let $r= d+1 $. Assume that $\Omega \subset \R^d$, where $d\geq 2$, is an analytic domain, and suppose that $v_0\in X(\tau_0)$, for some $\tau_0\in(0,1]$. Then there exists $M\geq 1$  and a unique solution $v\in C ([0,T_0];\widetilde{X}(\tau(t)) )$ to \eqref{intro_euler_v}, satisfying the initial condition $v(0)=v_0$ and the estimate
\eqnb\label{002}
\frac{\d }{\d t } \| v \|_{\widetilde{X}(\tau )} - \dot \tau \| v \|_{Y(\tau )} \lec_r \| v \|_{\widetilde{X}(\tau )} \| v \|_{\widetilde{Y}(\tau )} + \| \ou \|_{\overline{Y}(\tau )} \| v \|_{\widetilde{X}(\tau )} + \| \ou \|_{\widetilde{X}(\tau )} \| v \|_{\overline{Y}(\tau )}+ \| \ou \|_{\widetilde{X}(\tau )} \| \ou \|_{\overline{Y}(\tau )} + \| \p_t \ou \|_{\tX (\tau )}
,
\eqne
for all $t\in [0,T_0]$, where 
\eqnb\label{def_tau}
\tau (t) \coloneqq \tau_0 - M t,
\eqne
and $T_0\coloneqq \tau_0 /M$.
\end{theorem}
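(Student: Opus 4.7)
My plan is to establish \eqref{002} as an a priori estimate for smooth (or suitably regularized) solutions, then combine it with the Cauchy--Kovalevskaya--type gain produced by the shrinking radius of analyticity to close the bound, and finally upgrade the a priori estimate into existence and uniqueness via an iteration scheme in the scale of spaces $\{\tX(\tau)\}_{\tau>0}$. The overall architecture follows the analytic energy method: introduce the conormal--normal derivative decomposition $\p^i T^j$ from Section~\ref{sec3}, apply it to \eqref{intro_euler_v}, test against $\p^i T^j v$, weight by $c_{i,j}$, and sum over $i+j\geq r$.

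The first task will be to differentiate \eqref{intro_euler_v} by $\p^i T^j$ and compute the $L^2$ inner product with $\p^i T^j v$. Since $v\cdot \mathsf n=0$ on $\p\Omega$, the tangential derivatives $T^j$ preserve the boundary condition, so commuting $[\p^i T^j,\text{transport}]$ produces only interior terms; the normal derivative $\p$ introduces boundary contributions that are controlled because $v\cdot\mathsf n=0$ kills the troublesome piece $\int_{\p\Omega} |\p^\alpha v|^2 \ou\cdot\mathsf n$ that obstructs the direct Sobolev approach discussed around \eqref{der_loss}. The nonlinear commutators $[\p^i T^j,v\cdot\nabla]v$ and $[\p^i T^j,\ou\cdot\nabla]v$, together with $\p^i T^j(v\cdot\nabla \ou)$ and $\p^i T^j(\ou\cdot\nabla\ou)$, will be expanded by Leibniz. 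Each of these splits into sums of products $(\p^{i_1}T^{j_1}\text{lower factor})(\p^{i_2}T^{j_2}\nabla\text{higher factor})$; using the combinatorial identity governing the coefficients $c_{i,j}=(i+j)^r\tau^{i+j-r}\bar\epsilon^i\epsilon^j/(i+j)!$, the sum will be reorganized so that the factor carrying the full gradient is measured in $\oY(\tau)$ via \eqref{grad_in_X_is_Y}, while the remaining factor is measured in $\tX(\tau)$. This produces exactly the cross-terms $\|v\|_{\tX}\|v\|_{\tY}$, $\|\ou\|_{\oY}\|v\|_{\tX}$, $\|\ou\|_{\tX}\|v\|_{\oY}$, and $\|\ou\|_{\tX}\|\ou\|_{\oY}$ appearing on the right-hand side of \eqref{002}.

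The main obstacle, and the step that requires the most care, is the pressure term $\int (\p^i T^j \nabla p)\cdot \p^i T^j v$. Because $v\cdot\mathsf n=0$ on $\p\Omega$, integration by parts still yields a boundary contribution as soon as $i\geq 1$, and moreover $p$ is defined only implicitly through the nonhomogeneous Neumann problem \eqref{p_system}, whose Neumann data involves the full $u=v+\ou$. Here the plan is to invoke Lemma~\ref{L_pressure_curved}, which should provide an analytic elliptic estimate of the form $\|\nabla p\|_{\tX(\tau)}\lec (\|v\|_{\tX}+\|\ou\|_{\tX})(\|v\|_{\oY}+\|\ou\|_{\oY})$ with constants that are uniform provided $\bar\epsilon$ and $\bar\epsilon/\epsilon$ are small enough (which is precisely why those smallness assumptions were imposed). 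Feeding this bound into the energy identity yields exactly the structure of \eqref{002}. Time differentiation of $\|v\|_{\tX(\tau(t))}$ additionally produces the term $\dot\tau\|v\|_{Y(\tau)}$ (with $\dot\tau=-M<0$) coming from differentiating the weights $c_{i,j}$ in~$\tau$; this term is placed on the left-hand side of \eqref{002}.

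To close the estimate and pass from a priori bound to a solution, observe that on the left $-\dot\tau \|v\|_Y=M\|v\|_Y$ controls the $\|v\|_{\tY}$ piece on the right once $M$ is taken large relative to the initial data (using $\|v\|_{\tY}=\tau\|v\|_Y+\|v\|_{H^r}$ and that the $H^r$ part is absorbed into $\|v\|_{\tX}$). This turns \eqref{002} into a genuine Gronwall-type inequality for the scalar $\|v\|_{\tX(\tau(t))}$, bounded on $[0,T_0]$ as long as $\tau(t)=\tau_0-Mt>0$. For existence, the plan is to construct the solution by the Picard-type iteration
\[
\p_t v^{(n+1)}+v^{(n)}\!\cdot\!\nabla v^{(n+1)}+\ou\!\cdot\!\nabla v^{(n+1)}+v^{(n+1)}\!\cdot\!\nabla\ou+\ou\!\cdot\!\nabla\ou+\nabla p^{(n+1)}=0,
\]
$\nabla\cdot v^{(n+1)}=0$, $v^{(n+1)}\cdot\mathsf n=0$, with $v^{(0)}\equiv 0$; each linear step is solvable (with smooth enough data) and the same analysis yields a bound for $\|v^{(n+1)}\|_{\tX(\tau)}$ independent of $n$ on $[0,T_0]$. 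A contraction estimate on the differences $v^{(n+1)}-v^{(n)}$ in a slightly smaller analytic scale $\tX(\tau'(t))$ with $\tau'<\tau$ then gives convergence, and uniqueness follows from an identical difference estimate applied to two hypothetical solutions.
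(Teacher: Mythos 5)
Your overall architecture (weighted $\p^iT^j$ energy estimates, a product lemma and an analytic pressure lemma, absorption of the derivative loss by $-\dot\tau\|v\|_{Y}=M\|v\|_{Y}$, and a contraction on differences) matches the paper. There are, however, two concrete problems with the proposal as written.

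First, your treatment of the top-order transport term is incorrect. You claim that $v\cdot\mathsf{n}=0$ ``kills the troublesome piece $\int_{\p\Omega}|\p^\alpha v|^2\,\ou\cdot\mathsf{n}$.'' It does not: that boundary integral is weighted by $\ou\cdot\mathsf{n}$, which is nonzero by the very nature of the inflow--outflow problem; this is precisely the obstruction \eqref{der_loss} and the reason the Sobolev approach fails. The paper's resolution is the opposite of integrating by parts: the term $\int((v+\ou)\cdot\nabla\p^\alpha v)\cdot\p^\alpha v$ in \eqref{EQ08} is estimated directly by $\|\ou\|_\infty\|\p^\alpha v\|\,\|\nabla\p^\alpha v\|$, accepting a full derivative loss; after summation this yields $\|\nabla v\|_{\tX}\lec\|v\|_{\oY}$ by \eqref{grad_in_X_is_Y}, which is then absorbed by $M\|v\|_{Y}$ on the left. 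If your argument relies on the boundary term vanishing, it collapses at the first step; if you instead accept the loss, you must say so, since that is the entire point of working in the analytic scale.

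Second, your iteration is semi-implicit: $v^{(n+1)}$ sits inside the transport operator $(v^{(n)}+\ou)\cdot\nabla v^{(n+1)}$, so each step requires solving a linearized Euler system with inflow--outflow boundary conditions. That linear problem is itself nontrivial (it is the subject of \cite{GKM1}) and exhibits the same derivative-loss issue as the nonlinear one, so asserting that ``each linear step is solvable'' begs the question. The paper instead uses the fully explicit scheme \eqref{EQ32}, in which all nonlinear and pressure terms are evaluated at level $n$ and $v^{(n+1)}$ is obtained by direct time integration, so no linear PDE solvability is needed; the divergence-free and impermeability conditions for $v^{(n+1)}$ are then inherited from the elliptic problem defining $p^{(n)}$. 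The price is that the cancellation in the transport term is lost, so the iterate estimate \eqref{014} carries $\|\cdot\|_{\oY}$ rather than $\|\cdot\|_{\tY}$ on the right; as the paper notes, this does not matter because $M\|v^{(n+1)}\|_{Y}$ still absorbs it. Your contraction in a strictly smaller radius $\tau'<\tau$ is a workable variant of the paper's contraction in the quantity $a_n+Mb_n$ at the same radius, and is not itself a gap.
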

\colb

We emphasize that Theorem~\ref{T01} does not involve \emph{any compatibility conditions} (except for the divergence-free condition $\mathrm{div}\,\ou=0$, which in turn guarantees that
%\bnew $\int_{\p \Omega } \ou \cdot \mathsf{n} =0$ \enew\bold $\int_{\p \Omega } v \cdot \mathsf{n} =0$ \eold).
$\int_{\p \Omega } \ou \cdot \mathsf{n} =0$. 
This does not contradict the issue \eqref{issue} discussed above since it is not clear whether the quantities $\omega$ , $\nabla_{\p \Omega } p$, $u^\mathsf{n}$, and $u^\mathsf{t}$ are entirely independent of each other. In fact, Theorem~\ref{T01} shows that they are not in the analytic setting.

On the other hand, the issue of the derivative loss pointed out in \eqref{der_loss} is accommodated by the analytic dissipation. Roughly speaking, $Y$ encodes the derivative loss through the second term on the left-hand side of~\eqref{002}. In order to illustrate this, suppose that there exists $A>0$, depending only on  $\ou$ and $v_0$, such that 
\eqnb\label{002_spse}
\sup_{t\in [0,T_0]} \| v \|_{\tX } + M \int_0^{T_0} \| v \|_{Y } \leq A.
\eqne
For illustration purposes, consider a simplified version of \eqref{002}, where we only take into account the first term on the right-hand side
of \eqref{002}
with the norm $\| \cdot \|_{\tY}$ replaced by its analytic part $\| \cdot \|_{Y}$, 
\eqnb\label{002_spse1}
\frac{\d }{\d t} \| u \|_{\tX } + M \| v \|_{Y}  \leq C_0 \| v \|_{\tX } \| v \|_{Y}. 
\eqne
This way we consider only the leading order quadratic term on the right-hand side, demonstrating the strength of the analytic dissipation. Note that \eqref{002_spse1} gives 
\[ \sup_{t\in [0,T_0]} \| v \|_{\tX } + M \int_0^{T_0} \| v \|_{Y } \leq C_0 \int_0^{T_0} \| v \|_{\tX } \| v \|_{Y} \leq C_0 \sup_{t\in [0,T_0]} \| v \|_{\tX } \int_0^{T_0} \| v \|_{Y} \leq C_0 A^2 M^{-1},
\]
where we used the assumption \eqref{002_spse} in the last inequality.
Thus, choosing a sufficiently large $M\geq 1$, the right-hand side is bounded by $A/2$, which is consistent with \eqref{002_spse} and also gives the leeway of another $A/2$ for estimating the remaining terms appearing on the right-hand side of~\eqref{002}. In particular, the Sobolev part of $\| v \|_{\tY}$, which we omitted above,  can be bounded by~$\| v \|_{\tX}$. It is thus clear that, upon making an appropriate choice of $A$ (see~\eqref{def_A} below) and sufficiently large $M\geq 1$ (see~\eqref{choice_M_T0} below), the local well-posedness result of Theorem~\ref{T01} can be obtained with $T_0=\tau_0 /M$. Note that a precise argument requires a Picard-type argument; see Section~\ref{sec_flat_conclude} for details. We emphasize that the local-posedness result is possible by ensuring that the analyticity radius decreases sufficiently quickly, i.e., we set the analytic dissipation $M\geq 1$ sufficiently large.

We emphasize that Theorem~\ref{T01} does not take advantage of the fact that the analytic part of $\| v \|_{\tY}$ on the right-hand side of \eqref{002} involves an additional factor of~$\tau$. This is because we need to take $M\geq 1$ large, depending on~$\ou$. Moreover, in the $3D$ case the time of existence cannot be extended further not only due to $\ou$, which could be large, but also because the $3$D Euler equations are only locally well-posed. In contrast, in the 2D case, we can explore the global Sobolev existence by and assuming that $\ou$ decays sufficiently fast and choosing $\dot \tau$ proportional to $-\tau$, since this is what making the best use of ``$\tau$'' in $\| v \|_{\tY} = \tau \| v \|_{Y} + \| v \|_{H^r}$, appearing in the first term on the right-hand side of \eqref{002}, suggests. This way, we obtain the global well-posedness (see Theorem~\ref{T02}),
whose proof takes advantage of the
%which also demonstrates the notion of the
\emph{persistence of regularity}; see the comments below Theorem~\ref{T02}.

 Before stating the global well-posedness result in the 2D case, we note that the condition $r=d+1$ in Theorem~\ref{T01} (which is sufficient but not optimal) is related to the product estimates for the lower order part of the analytic norm $\|\cdot \|_{\tX}$, namely the part that does not admit the analytic dissipation. To be precise, we need the condition $r\geq d+1$ for the product estimates (Lemma~\ref{L_product}, Corollary~\ref{cor_products}, \eqref{product_curved}, \eqref{010_curved}--\eqref{013_curved}), as it plays a crucial role in the factorial bounds (see \eqref{calc_al}--\eqref{calc_ah}, for example) and in controlling the powers of $\tau$ (see \eqref{bl}--\eqref{bh}, for example). It is also required in performing the {persistence of regularity} argument (see the comments below Theorem~\ref{T02}) and in the pressure estimates (see Lemmas~\ref{L02} and~\ref{L_pressure_curved}). The choice $r=d+1$ is sufficient for our purposes, and we fix this value for $r$ throughout, but we note that it can be relaxed to any $r>d/2$, as one can verify directly. 

We provide details for the case $d=2$ of Theorem~\ref{T01}; for other $d$, the arguments apply analogously, except that the computations become more complicated. The computations in the case $d=2$ should convince the reader that the relevant cancellations and summations properties persist in any space dimension.

We note that the treatment of an arbitrary domain $\Omega$ requires some technical computations regarding the tangential operator $T$, and so, for the sake of simplicity, we first focus on the \emph{flat case}, namely  
\begin{equation}
\Omega = \TT \times (0,1),
   \label{EQ21}
     \end{equation}
where $\TT$ denotes the flat torus. This already demonstrates the strength of the method and reveals the main idea of the proof of Theorem~\ref{T01}, 
with a general analytic domain treated further below.
 The main observation in the flat case is that the partial derivative $\p_{1}$ is tangential to the boundary. Moreover, $\p_{2}$
is normal to the boundary, and the two derivatives together form the full gradient. Consequently,  the product and pressure estimates are simpler, and therefore it is reasonable to replace the analytic norms~\eqref{EQ19c} with simpler definitions
 \begin{align}
  \begin{split}
   &
   \Vert u\Vert_{X(\tau)}
   \coloneqq 
   \sum_{ |\alpha |\geq r}
    \frac{
     |\alpha |^{r}
        }{
     |\alpha |!
    }
    \tau^{|\alpha |-r}
   \epsilon^{\alpha_2}
    \Vert \p^\alpha u\Vert
    ,
    \\&
   \Vert u\Vert_{\tX(\tau)}
   \coloneqq 
   \Vert u\Vert_{X(\tau)}
   + \Vert u\Vert_{H^{r}}
  ,
  \\&
   \Vert u\Vert_{Y(\tau)}
   \coloneqq 
   \sum_{ |\alpha |\geq r+1}
    \frac{
     |\alpha |^{r+1}
        }{
     |\alpha |!
    }
    \tau^{|\alpha |-r-1}
    \epsilon^{\alpha_2}
    \Vert\p^\alpha u\Vert
    ,
    \\&
   \Vert u\Vert_{\widetilde{ Y}(\tau)}
   \coloneqq 
  \tau  \Vert u\Vert_{Y(\tau)}
   + \Vert u\Vert_{H^{r}}
   ,
    \\&
   \Vert u\Vert_{\overline{Y}(\tau)}
   \coloneqq 
  \Vert u\Vert_{Y(\tau)}
   + \Vert u\Vert_{H^{r}}.
  \end{split}
   \label{EQ19}
  \end{align}

As for the case of a general domain, treated in Section~\ref{sec3}, we introduce the necessary concepts in Section~\ref{sec_prelims}, and we prove Theorem~\ref{T01} in full generality in Section~\ref{sec_curved}.  The proof requires introducing an abstract notion of a tangential operator~$T$ and taking into account the resulting commutators.
It also requires the derivative reductions (Lemma~\ref{L001}) to get a sufficiently strong estimate on the pressure, particularly regarding the boundary condition in \eqref{EQ25_curved}; see Section~\ref{sec_pressure_gen} for details.

Our second result asserts the global-in-time well-posedness in the 2D case under a decay-in-time assumption on~$\ou$.

\begin{theorem}[Global well-posedness in $2$D]\label{T02}
Let $d=2$ and $\tau_0\in(0,1]$, and suppose that $v_0\in \widetilde{X}(\tau_0 )\cap \overline{Y}(\tau_0 )$. Then there exist $A,B\geq1$ such that if
\begin{equation}
\tau (t) \coloneqq \tau_0 \exp\left( -A \int_0^t \exp \exp \exp (A s)\d s \right)
   \label{EQ22}
     \end{equation}
and
 $\| \ou (t) \|_{\widetilde{X}(\tau(t))}+ \| \ou (t) \|_{\overline{Y}(\tau(t))}+\| \p_t \ou (t) \|_{\tX (\tau (t))}  \leq f(t)$ for all $t\geq 0$, where  $f\colon [0,\infty )\to [0,\infty )$ satisfies
 \begin{equation}
 B f(t) \exp \exp \exp \exp (Bt)\leq 1, \qquad \text{ for all }t\geq 0,
   \label{EQ23}
     \end{equation}
 then there exists a unique solution $v\in L^\infty ([0,\infty);\widetilde{X}(\tau))$ to the system \eqref{intro_euler_v} with $-\dot \tau v \in L^1 ((0,\infty ); \overline{Y}(\tau ))$. 
\end{theorem}
Here, $v$ is a solution to \eqref{intro_euler_v} in the sense that Definition~\ref{D_sol} holds for any $T_0>0$.

The proof of the above theorem uses the \emph{persistence of analyticity}, namely that the existence in analytic spaces can be extended as long as a certain subcritical norm remains under control. This can be observed in the a~priori estimate~\eqref{002}, which shows that the only quadratic term on the right-hand side, namely  $\| v \|_{\widetilde{X}(\tau )} \| v \|_{\widetilde{Y}(\tau )}$, involves an additional factor of $\tau$ next to the $Y$ norm (recall \eqref{EQ19c}, \eqref{EQ19}). This shows that if $\tau\colon [0,\infty )\to [0,\infty )$ satisfies 
\eqnb\label{temp23}
-\dot \tau \geq C \left( \tau\| v \|_{\widetilde{X}} +f \right)
 \eqne
  with a large $C\geq1$,  then the $Y$ norm can be absorbed by the left-hand side, and thus the right-hand side of \eqref{002} grows linearly in $v$ provided $\| v \|_{H^3}$ is bounded by a constant. However, due to the inflow-outflow velocity $\ou$, it is not immediately clear how to control~$\| v \|_{H^3}$. Indeed, noting that the vorticity $\theta \coloneqq \mathrm{curl}\, (v+\ou )$ satisfies 
 \begin{equation}
 \p_t \theta + (v+\ou ) \cdot \nabla \theta =0,
   \label{EQ24}
     \end{equation}
we observe that $\| \theta \|_{L^\infty}$ is \emph{not conserved} as $\ou$ can bring nonzero vorticity from outside of~$\Omega$. Thus it is not clear, at this point, how to obtain the global well-posedness. However, one may observe that, since $v_0$  is analytic, it can be uniquely extended to a small neighborhood of $\Omega$, which gives us control of the vorticity that can be brought in by $\ou$, provided $\ou$ decays sufficiently fast (as mentioned above). To be precise, observing  that 
 \begin{equation}
 \frac{\d }{\d t} \| \theta \|_\infty \leq \| \ou \|_\infty \| \nabla \theta \|_\infty\leq \| \ou \|_\infty (\| v \|_{W^{2,\infty}} + \| \ou \|_{W^{2,\infty }} ) ,
   \label{EQ26}
     \end{equation}
we can estimate $\| v \|_{W^{2,\infty}}$ by $\| v \|_{\tX (\tau )}$. This lets us predict, for sufficiently decaying $\ou$, an estimate on the global-in-time growth of $\| v \|_{\tX (\tau )}$ as well as the decay of $\tau(t)$ that both control $\| \theta \|_\infty$ globally, and hence also $\| v \|_{H^3}$ and~$\| v \|_{\tX (\tau )}$. Thus we can back-define $\tau$, using the smallness assumption \eqref{EQ23} on $f$ and a continuity argument; see Section~\ref{sec_global} for details.

\subsection{Smoothness of solutions}
%\bnew
We note that Theorem~\ref{T01} only gives $u$ which is bounded with values in $X(\tau)$, and it is not immediately clear that $u$ is a smooth function of~$t$ if $\ou$ is. To explain this, assume that $\ou$ is smooth in space and time. We 
first note that \eqref{EQ01} holds as an equality in $H^k$ for every $k\geq 0$, and so
  \begin{equation}
   u(t_2)-u(t_1)
   = - \int_{t_1}^{t_2}
       (u\nabla u + \nabla p)
     \,\d s
 \quad \text{ for } 0\leq t_1<t_2\leq T
     ,
   \label{EQ02}
  \end{equation}
  as an equality in $H^k$ for every $k\geq 0$. Varying $t_1,t_2$, we obtain that $u\in C([0,T_0]; H^k)$ for every $k\geq 0$. Then, applying the divergence operator to \eqref{EQ02}, varying
$t_1$ and $t_2$, and applying Lebesgue Differentiation Theorem and trace estimates shows that \eqref{p_system} holds for almost every $t\in [0,T_0]$. Modifying $p$ on a set of times of measure $0$, we thus obtain that $p\in C([0,T_0];H^k)$ for every $k\geq 0$. Dividing \eqref{EQ02} by $t_2-t_1$ and varying $t_1$, $t_2$, we obtain that $u\in C^1 ([0,T_0];H^k)$ for every $k\geq 0$ and consequently $p\in C^1([0,T_0];H^k)$ for every $k\geq 0$. We can continue by induction, obtaining that $u$ and $p$
have the same degree of smoothness as~$\ou$.

\section{The case of a flat domain}\label{sec_flat_LWP}

In this section, we prove Theorem~\ref{T01} for the case of a periodic channel, i.e., $\Omega =\TT \times (0,1)$. 

\subsection{A~priori analytic estimate (periodic channel)}\label{sec_apriori_analytic_flat}

Here, we derive the analytic a~priori bound  \eqref{002}, provided that the analyticity radius $\tau  \colon [0,T] \to \R_+$ and the solution $v \colon [0,T]\times  \Omega \to \R^2$ are given; the construction is presented in Section~\ref{sec_flat_conclude}.
We apply $\p^\alpha$ to the first equation in \eqref{intro_euler_v} and  test with $\p^\alpha v$ obtaining
  \eqnb\label{EQ08}
  \begin{split}
   \frac12
   \frac{\d}{\d t}
   \int
  |\partial^{\alpha}v|^2
     =&-
         \int ( (v+\ou)\cdot \nabla  \partial^{\alpha} v) \cdot \partial^{\alpha}v
       - \int S_\alpha (v+\ou  ,v+\ou )\cdot \p^\alpha v   \\
       &-\int \left( (v+\ou ) \cdot \nabla \p^\alpha \ou  \right)\cdot \p^\alpha v  -
   \int \partial^{\alpha} \nabla p \cdot \partial^{\alpha} v-\int \p^\alpha \p_t \ou \cdot \p^\alpha v
   ,
  \end{split}
  \eqne
  where we set
\eqnb\label{def_Salpha}
S_\alpha (v,w) \coloneqq \sum_{0<\beta \leq \alpha } {\alpha \choose \beta } (\p^\beta v \cdot \nabla ) \p^{\alpha-\beta } w.
\eqne  
Here and in the sequel, we use the notation $\int\coloneqq \int_\Omega$.
Note that the first term on the right-hand side of~\eqref{EQ08} may be estimated by $\| \ou \|_\infty \| \p^\alpha v \| \| \nabla \p^\alpha v \|$. 

The main difficulties in obtaining the a~priori estimate \eqref{002} is the treatment of the nonlinear terms $\|S(\cdot , \cdot )\|$ and the pressure term~$\| \p^\alpha \nabla p \|$. 

\cole
\begin{lemma}(product estimate in the case of periodic channel)\label{L_product}
{With $r\geq d+1$}, 
we have 
\begin{equation}
\sum_{|\alpha |\geq r } \frac{|\alpha |^r}{|\alpha |!}\tau^{|\alpha |-r } \epsilon^{\alpha_2} \| S_\alpha (u,v)\|  \lec_r \| v \|_{\widetilde{Y}(\tau )} \| u \|_{\widetilde{X}(\tau )} +\| v \|_{\widetilde{X}(\tau )} \| u \|_{\widetilde{Y}(\tau )}.
   \label{EQ28}
     \end{equation}
\end{lemma}
\colb

\begin{proof}[Proof of Lemma~\ref{L_product}]
Assume that $\tau$ is smooth, positive and decreasing on
$[0,T]$ with $\tau(0)=\tau_0\leq 1$.
First, we apply H\"older's inequality to~$\| S_\alpha (u,v) \|$.
%(recall  \eqref{def_Salpha} for the definition of $S_\alpha$).
We use the $L^\infty$ norm for the derivatives of smaller order, and $L^2$ for the larger order. Namely, for  $|\beta |\leq |\alpha | /2$, we employ the estimate 
\begin{equation} \| (\p^\beta u \cdot \nabla ) \p^{\alpha -\beta } v \| \leq \| \p^\beta u \|_{L^\infty} \|  \nabla  \p^{\alpha -\beta } v \|
,
   \label{EQ29}
     \end{equation}
and for $|\beta |> |\alpha | /2$ we switch the $L^\infty$ and $L^2$ norms. This gives 
\begin{equation}
\| S_\alpha (u,v)\| \lec  \sum_{0< |\beta |\leq |\alpha |/2 } {\alpha\choose\beta}
     \| \partial^{\beta}u\|_{L^{\infty}} \|\partial^{\alpha-\beta}\nabla v\|+  \sum_{ |\beta |> |\alpha |/2 } {\alpha\choose\beta}
     \| \partial^{\beta}u\| \|\partial^{\alpha-\beta}\nabla v\|_{L^{\infty}}
,
   \label{EQ30}
     \end{equation}
for every $|\alpha |\geq r$. We now  interpolate the $L^\infty$ norm in each of the two sums by $L^2$ and~$H^2$. Namely, we use 
\eqnb\label{interp1}
\begin{split}
\| \p^\beta u \|_{L^\infty } &\lec \| \p^\beta u \|^{1/2} \| D^2 \p^\beta u \|^{1/2} + \| \p^\beta u \|,
\\
\| \p^{ \alpha - \beta }\na  v \|_{L^\infty } &\lec \| \p^{\alpha - \beta } \na v \|^{1/2} \| D^2 \p^{\alpha - \beta}\na v  \|^{1/2} + \| \p^{\alpha -\beta  } \na v \|
\end{split}
\eqne
for the first and second sums, respectively. 
Using the inequality ${\alpha \choose \beta }\leq {|\alpha | \choose |\beta |}$ (see~\cite[p.~35]{LM01}) and the notation
\begin{equation}\begin{split}
F_\alpha (u )& \coloneqq \frac{\epsilon^{\alpha_2 1_{|\alpha |\geq r }} |\alpha |^r \tau^{(|\alpha | -r ) \vee 0}}{|\alpha |!} \| \p^\alpha u \| ,\\
 G_\alpha (u) &\coloneqq \frac{\epsilon^{\alpha_2 1_{|\alpha |\geq r +1}} |\alpha |^{r+1} \tau^{(|\alpha | -r -1) \vee 0}}{|\alpha |!} \| \p^\alpha u \|,
   \label{EQ31}\end{split}
     \end{equation}
we thus obtain
\eqnb\label{004}
\begin{split}
\sum_{|\alpha |\geq r } \frac{|\alpha |^r}{|\alpha |!}\tau^{|\alpha |-r } \epsilon^{\alpha_2} \| S_\alpha (u,v) \| &\lec \sum_{\substack{|\gamma | \leq 2 \\ |\kappa | =1}}  \sum_{|\alpha |\geq r }\biggl(  \sum_{0< |\beta |\leq |\alpha |/2 } a_{\low } b_{\low } F_{\beta }(u)^{1/2} F_{\beta+\gamma }(u  )^{1/2} G_{\alpha-\beta +\kappa } (v)  \\
&\hspace{2cm} +\sum_{ |\beta |> |\alpha |/2 } a_{\high } b_{\high } G_\beta (u) F_{\alpha -\beta +\kappa } (v)^{1/2} F_{\alpha - \beta + \kappa +\gamma }(v)^{1/2} \biggr),
\end{split}
\eqne
where we used
\eqnb\label{epsilon_est}
\epsilon^{\alpha_2} \epsilon^{-\frac12 \beta_2 1_{|\beta |\geq r}}\epsilon^{-\frac12 (\beta_2+\gamma_2 ) 1_{|\beta +\gamma  |\geq r}} \epsilon^{- (\alpha_2-\beta_2+\kappa_2) 1_{|\alpha -\beta +\kappa  |\geq r+1}} \leq \epsilon^{-\frac12 \gamma_2 - \kappa_2} \leq \epsilon^{-2} \lec 1
\eqne
in the case $|\beta |\leq |\alpha |/2$ (employing the notation $\kappa = (\kappa_1,\kappa_2)$, and similarly for the mutiindices $\alpha, \beta, \gamma$), and analogously for $|\beta | > |\alpha |/2$. We also used the fact that the low coefficient $a_{\low}$ of factorials, for $|\kappa |=1$, $|\gamma |\leq 2$,
is bounded as
\eqnb\label{calc_al}\begin{split}
a_{\low} &\coloneqq  \frac{|\alpha |^r }{|\beta |! |\alpha - \beta |! } \cdot \left( \frac{|\beta |! |\beta + \gamma |!}{|\beta |^r |\beta + \gamma |^r } \right)^{1/2}\cdot \frac{|\alpha - \beta + \kappa |! }{|\alpha - \beta +\kappa |^{r+1}}\\
&= \underbrace{\frac{(|\alpha - \beta + \kappa |-1)!}{|\alpha - \beta |!}}_{\lec 1} \left( \frac{|\beta +\gamma |!}{|\beta|! |\beta|^r |\beta +\gamma |^r} \right)^{1/2}  \underbrace{ \frac{|\alpha |^r}{ |\alpha - \beta +\kappa |^{r} } }_{\lec 1}
\\&
\lec_r
 \left( \frac{|\beta +\gamma |!}{|\beta|! |\beta|^r |\beta +\gamma |^r} \right)^{1/2}
\lec \left(\frac{|\beta+\gamma|!}{|\beta |! |\beta|^{2r} }\right)^{1/2}
%\\&
\lec 1,
\end{split}
\eqne
as $r\geq 2$.
For the high factorial coefficient $a_{\high}$, we write
\eqnb\label{calc_ah}
\begin{split}
 a_{\high } &\coloneqq  \frac{|\alpha |^r }{|\beta |! |\alpha - \beta |! } \cdot  \frac{|\beta |! }{|\beta |^{r+1}} \cdot \left(\frac{|\alpha - \beta + \kappa |! |\alpha - \beta + \kappa +\gamma  |! }{|\alpha-\beta + \kappa  |^r |\alpha-\beta + \kappa+\gamma  |^r } \right)^{1/2}   \\
 &=  \underbrace{\frac{(|\alpha - \beta + \kappa |-1)!^{1/2}(|\alpha - \beta + \kappa +\gamma |-3)!^{1/2} }{|\alpha - \beta |!}}_{\lec 1}\\
&\hspace{1cm}\cdot \left( \frac{|\alpha -\beta + \kappa |\,\,|\alpha -\beta + \kappa +\gamma |! }{ (|\alpha -\beta + \kappa +\gamma| -3 )!| \alpha - \beta +\kappa |^r  | \alpha - \beta +\kappa +\gamma |^{r}} \right)^{1/2}  \underbrace{ \frac{|\alpha |^r}{ |\beta |^{r+1} } }_{\lec 1}  \\
&
\lec_r  \left( \frac{(|\alpha -\beta|+1)(|\alpha -\beta| + 1 + |\gamma|)^{3}}{( | \alpha - \beta| + 1 )^r  (| \alpha - \beta| + 1 + |\gamma |)^{r} }  \right)^{1/2}
\lec_r 1
.
\end{split}
\eqne
Moreover, in \eqref{004} we denoted the low and high coefficients of powers of $\tau$ by 
\begin{equation}
\begin{split}
b_{\low } &\coloneqq \frac{\tau^{|\alpha |-r}}{\tau^{((|\beta |-r)\vee 0)/2}\tau^{((|\beta+\gamma |-r)\vee 0)/2} \tau^{(|\alpha - \beta + \kappa |-r-1)\vee 0} },\\
b_{\high } &\coloneqq \frac{\tau^{|\alpha |-r}}{\tau^{(|\beta |-r-1)\vee 0}\tau^{((|\alpha - \beta+\kappa  |-r)\vee 0)/2} \tau^{((|\alpha - \beta + \kappa +\gamma |-r)\vee 0)/2} }. 
\end{split}
   \label{EQ35}
     \end{equation}
Recalling from \eqref{EQ19} that $\| u \|_{\widetilde{X}(\tau )} = \sum_{|\beta |\geq 0 } F_\beta (u)$ and $\| u \|_{{Y}(\tau )} = \sum_{|\beta |\geq r+1 } G_\beta (u)$, we now separate  the low and the high sums into two parts in a way that one is bounded by $\tau \| \cdot  \|_{Y(\tau )}$ and the other by $\| \cdot \|_{H^r}$, where the function under the norm is either $u$ or~$v$. To be more precise, we show below that
\begin{eqnarray}
b_{\low}  &\lec \begin{cases} 
\tau \qquad &\text{ if } |\alpha - \beta |\geq r, \\
1 &\text{ if } |\alpha - \beta | \leq r-1,
\end{cases} \label{bl}\\
b_{\high } &\lec \begin{cases} 
\tau \qquad &\text{ if }|\beta |\geq r+1, \\
1 &\text{ if } |\beta |\leq r.
\end{cases}\hspace{0.8cm} \label{bh}
\end{eqnarray}
Supposing that \eqref{bl}--\eqref{bh} hold, \eqref{004} gives
\eqnb\label{calc001}
\begin{split}
\sum_{|\alpha |\geq r } \frac{|\alpha |^r}{|\alpha |!}\tau^{|\alpha |-r } \epsilon^{\alpha_2} S_\alpha (u,v) &\lec \sum_{\substack{|\gamma | =2 \\ |\kappa | =1}}  \sum_{|\alpha |\geq r }\left(  \left( \sum_{\substack{0< |\beta |\leq |\alpha |/2 \\ |\alpha - \beta |\leq r-1}} + \tau \sum_{\substack{0< |\beta |\leq |\alpha |/2 \\ |\alpha - \beta |\geq r}}   \right) F_{\beta }(u)^{1/2} F_{\beta+\gamma }(u  )^{1/2} G_{\alpha-\beta +\kappa } (v)  \right.\\
&\hspace{2cm}\left. +\left( \sum_{\substack{ |\beta |> |\alpha |/2 \\ |\beta |\leq r}} +\tau \sum_{\substack{ |\beta |> |\alpha |/2 \\ |\beta |\geq r+1}}  \right) G_\beta (u) F_{\alpha -\beta +\kappa } (v)^{1/2} F_{\alpha - \beta + \kappa +\gamma }(v)^{1/2} \right)\\
&\lec\| v \|_{\widetilde{Y}(\tau )} \sum_{|\gamma |=2} \sum_{|\beta |>0} F_{\beta }(u)^{1/2} F_{\beta+\gamma }(u  )^{1/2}  \\
&\hspace{1cm}+\left( \sum_{0< |\beta |\leq r} +\tau \sum_{ |\beta |\geq r+1}  \right) G_\beta (u) \sum_{\substack{|\gamma | =2 \\ |\kappa | =1}}  \sum_{|\alpha |\geq r }F_{\alpha -\beta +\kappa } (v)^{1/2} F_{\alpha - \beta + \kappa +\gamma }(v)^{1/2} \\
&\lec \| v \|_{\widetilde{Y}(\tau )} \| u \|_{\widetilde{X}(\tau )} +\| v \|_{\widetilde{X}(\tau )}\left( \sum_{0< |\beta |\leq r} +\tau \sum_{ |\beta |\geq r+1}  \right) G_\beta (u) \\
&\lec\| v \|_{\widetilde{Y}(\tau )} \| u \|_{\widetilde{X}(\tau )} +\| v \|_{\widetilde{X}(\tau )}\| u \|_{\widetilde{Y}(\tau )},
\end{split}
\eqne
which proves the lemma.

It remains to show \eqref{bl} and~\eqref{bh}.
First, observe that
  \begin{equation}
   b_{\low } \leq\frac{\tau^{|\alpha |-r}}{\tau^{((|\beta |-r)\vee 0)/2}\tau^{((|\beta|+2-r)\vee 0)/2} \tau^{(|\alpha - \beta + \kappa |-r-1)\vee 0} }
   ,
   \label{EQ12}
  \end{equation}
using $\tau\leq 1$.
Note that we have $|\beta |\leq |\alpha |/2$, and so if $|\alpha - \beta |\leq r-1$, then 
   \label{EQ36}\begin{equation}b_{\low } \leq \tau^{|\alpha |-r
   -(|\beta |-r )/2 - (|\beta +\gamma |-r )/2}= \tau^{|\alpha |-|\beta
   |-1} \leq \tau^{|\alpha |/2 -1 } \leq 1.
     \end{equation}
 Otherwise, for $|\alpha - \beta |\geq r$, we have three cases:

\noindent\emph{Case 1.} $|\beta |\geq r$. Then
\begin{equation}
b_{\low } \leq \tau^{|\alpha | - r -\frac12 (|\beta | -r )-\frac12 (|\beta |+2-r ) -( |\alpha |-|\beta | - r )}=\tau^{r-1}\leq \tau .
   \label{EQ37}
     \end{equation}
\noindent\emph{Case 2.} $|\beta |\in \{ r-2,r-1 \}$. Then 
\begin{equation}
b_{\low } \leq \tau^{ |\alpha | - r -\frac12 (|\beta |+2-r ) -( |\alpha |-|\beta | - r  ) }= \tau^{ |\beta |/2 + r/2 -1  }\leq \tau^{  r-2 }\leq \tau  .
   \label{EQ38}
     \end{equation}
\noindent\emph{Case 3.} $|\beta |\leq r-3$. Then
\begin{equation}
b_{\low } \leq \tau^{|\alpha | - r  -( |\alpha |-|\beta | - r )}=\tau^{|\beta |} \leq \tau , 
   \label{EQ39}
     \end{equation}
as required.

As for \eqref{bh},
first note that
  \begin{equation}
   b_{\high } \leq \frac{\tau^{|\alpha |-r}}{\tau^{(|\beta |-r-1)\vee 0}\tau^{((|\alpha - \beta+\kappa  |-r)\vee 0)/2} \tau^{((|\alpha - \beta + \kappa |+2-r)\vee 0)/2} }
   ,
   \label{EQ13}
  \end{equation}
by $|\gamma|\leq2$.
Since
$|\beta |> |\alpha |/2$, and so, for $|\beta |\leq r$, 
\begin{equation}
b_{\high } \leq \tau^{|\alpha |-r -(|\alpha -\beta +\kappa |-r)/2-(|\alpha - \beta +\kappa |+2-r )/2}= \tau^{|\beta |-2 } \leq\tau^{|\alpha |/2 -3/2 } \leq 1.
   \label{EQ40}
     \end{equation}

Otherwise, for $|\beta |\geq r+1$, we have three cases:

\noindent\emph{Case 1.}  $|\alpha -\beta |\geq r-1$. Then
\begin{equation}
b_{\high }= \tau^{|\alpha | - r - (|\beta | - r-1) -\frac12 (|\alpha - \beta | -r+1 )-\frac12 (|\alpha - \beta  |-r+3 )}=\tau^{r-1}\leq \tau . 
   \label{EQ41}
     \end{equation}
\noindent\emph{Case 2.} $|\alpha -\beta |\in \{  r-3, r-2 \}$. Then
\begin{equation}
b_{\high } = \tau^{|\alpha | - r - (|\beta | - r-1) -\frac12 (|\alpha - \beta  |-r+3 ) }=\tau^{(|\alpha - \beta | + r -1)/2 }\leq  \tau^{r-2} \leq \tau . 
   \label{EQ43}
     \end{equation}
\noindent\emph{Case 3.} $|\alpha -\beta |\leq r-4 $. Then 
\begin{equation}
b_{\high } = \tau^{|\alpha | - r - (|\beta | - r-1)} =\tau^{|\alpha - \beta | +1}\leq \tau ,
   \label{EQ44}
     \end{equation}
as required.
\eold
\end{proof}

We note that the high and low factorial coefficients $a_{\rm h}$, $a_{\rm l}$, as well as the coefficients $b_{\rm h}$, $b_{\rm l}$ are not related to paraproduct estimates, but instead are a consequence of the product rule applied to \eqref{def_Salpha}, where we used  H\"older's inequality $\| fg\|\leq \|f \|_\infty \|g \|$ in \eqref{EQ30} differently for high and low $|\beta |\in (0,|\alpha |/2)$.\colb

\begin{remark}[The case of any dimension $d\geq 2$]
{\rm
It is easy to check that the above estimates can be performed for any $d\geq 2$.
Indeed, in a space dimension~$d$, we need to replace $D^2$ with $D^d$ in the embedding~\eqref{interp1}, so that $|\gamma |=d$ (instead of $|\gamma |=2$). A short calculation shows that the factorial estimates \eqref{calc_al}--\eqref{calc_ah} remain the same. As for the powers of $\tau$, we have $b_l =\tau^{E_l}$, $b_h =\tau^{E_h}$, where
\begin{equation}
\begin{split}
E_l &\coloneqq |\alpha | -d-1 -\frac12 \left( (|\beta | - d-1 )\vee 0 \right) - \frac12 \left( (|\beta | -1 )\vee 0 \right) -  \left( (|\alpha | - |\beta | - d-1 )\vee 0 \right) ,\\
E_h &\coloneqq |\alpha | -d-1 -\left( (|\beta | - d-2 )\vee 0 \right) - \frac12 \left( (|\alpha |- |\beta | -d )\vee 0 \right) - \frac12 \left( (|\alpha | - |\beta | )\vee 0 \right)
,
\end{split}
   \label{EQ45}
     \end{equation}
recalling that $r=d+1$. The main observation is that $E_l , E_h \geq 1$ as long as $|\alpha |\geq 2d+4$. Indeed, in the low case, i.e., when $|\beta |\leq |\alpha |/2$, we have $|\alpha - \beta | \geq |\alpha | - |\beta | \geq |\alpha |/2 \geq d+1$, Cases~1--3 above \eqref{EQ13} can be easily generalized as follows:\\
\emph{Case 1:} $|\beta |\geq d+1$. Then $E_l = |\alpha | -d-1 -\frac12 \left( |\beta | - d-1  \right) - \frac12 \left(|\beta | -1  \right) -  \left( |\alpha | - |\beta | - d-1 \right)=\frac{d+3}2 \geq 1$;\\
\emph{Case 2:} $1\leq |\beta |\leq d$. Then $E_l = |\alpha | -d-1  - \frac12 \left(|\beta | -1  \right) -  \left( |\alpha | - |\beta | - d-1 \right)=\frac{|\beta |+3}2 \geq 1$. \\
As for the high case $|\beta |> |\alpha |/2$, we have $|\beta |\geq d+2$, and:\\
\emph{Case 1:} $|\alpha - \beta |\geq n$. Then $E_h = |\alpha | -d-1 - \left( |\beta | - d-2  \right) - \frac12 \left(|\alpha |- |\beta | -d  \right) -  \frac12 \left( |\alpha | - |\beta |  \right)=\frac{d+2}2 \geq 1$;\\
\emph{Case 2:} $|\alpha - \beta |\leq d-1$. Then $E_h = |\alpha | -d-1 - \left( |\beta | - d-2  \right)  -  \frac12 \left( |\alpha | - |\beta |  \right)=\frac{d+2}2 \geq 1$. \\
If $a\coloneqq |\alpha  |\leq 2d+3 $, we simply observe that
\begin{equation}
\tau^{a-r} \| u\cdot \nabla v \|_{H^a} \lec \tau^{a-r} \| u \|_{H^r} \| v \|_{H^{a+1}} + \tau^{a-r} \| u \|_{H^a} \| v \|_{H^{r}} \lec  \| u \|_{H^r} \| v \|_{\tY} + \| u \|_{\tX } \| v \|_{H^r}
   \label{EQ46}
     \end{equation}
for any $a\geq r$, and the case $a\leq r-1$ follows similarly.

}
\end{remark}

%\begin{remark}[The case of any dimension $n\geq 2$]
%{\rm
%When $n\geq 2$, we need to replace $D^2$ with $D^n$ in the embedding~\eqref{interp1}. Consequently, in the analysis of the factorials \eqref{calc_al}--\eqref{calc_ah} and powers of $\tau$ \eqref{bl}--\eqref{bh}, we now have $|\gamma |\leq n$ (instead of $|\gamma|\leq 2$). It is easy to see that it suffices to take $r\geq d+1$ (as assumed in Theorem~\ref{T01}) for these to remain true. Indeed, analogous computations as above can be performed; for example if  $r\coloneqq n+1$ and $|\beta |\leq |\alpha |/2$ we have cases $|\alpha - \beta |\leq n$, which gives $b_{\rm l}\leq 1$, as well as $|\beta |\geq n+1$ and $|\beta | \in \{ 1, \ldots , n\}$ if $|\alpha - \beta |\geq n$. If $|\beta |>|\alpha |/2$ and $|\beta |\leq n+1$ then we have $b_{\rm h }\leq 1$. If instead $|\beta |\geq n+2$, then we have the cases $|\alpha -\beta |\geq n$ and $|\alpha -\beta |\in \{0,\ldots , n-1\}$, which give the same results as when~$n=2$.
%}
%\end{remark}

\begin{remark}
{
\rm
We note that the sum in $S_\alpha (u,v) $
%(recall \eqref{def_Salpha})
does not include the terms with $\beta=0$, which would result in a derivative of order $r+1$ in $\p^\beta u \cdot \nabla \p^{\alpha - \beta } v$.
This, together with our choice \eqref{EQ19} of the definitions of the analytic spaces $X(\tau )$ and $Y(\tau )$, ensures that the coefficients $b_l$, $b_h$ of the powers of $\tau$ are under sufficient control to yield the desired product estimate. In particular, thanks to this, the claim of Lemma~\ref{L_product} involves norms $\tX$ and $\tY$ (instead of, for example, $\oY$), providing a factor of $\tau$ in front of the $Y$ norm. This is essential for obtaining the persistence of analyticity; recall the comments following Theorems~\ref{T01} and~\ref{T02}.

On the other hand, note that the step showing that $a_l,a_h \lec_r 1$ above remains valid also in the case~$\beta =0$. In fact, in the case $\beta =0$ the same proof would give $\| \cdot \|_{Y(\tau )} +\| \cdot \|_{H^r}$ instead of $\| \cdot \|_{\widetilde{Y} (\tau )}$. We can thus obtain  the following.
}
\end{remark}

%An important observation in the above proof is that the sum in $S_\alpha (u,v) $
%%(recall \eqref{def_Salpha})
%does not include the case $\beta=0$, which would result in a derivative of order $r+1$ in $\p^\beta u \cdot \nabla \p^{\alpha - \beta } v$.
%This, together with our choice \eqref{EQ19} of the definitions of the analytic spaces $X(\tau )$ and $Y(\tau )$, ensures that the coefficients $b_l$, $b_h$ of the powers of $\tau$ are under sufficient control to yield the desired product estimate. On the other hand, note that the step showing that $a_l,a_h \lec_r 1$ above remains valid also in the case $\beta =0$. In fact, in this case $\beta =0$ the same proof would give $\| \cdot \|_{Y(\tau )} +\| \cdot \|_{H^r}$ instead of $\| \cdot \|_{\widetilde{Y} (\tau )}$. We can thus obtain  the following.
%\colb

\cole
\begin{corollary}[Related product estimates]\label{cor_products}
For all $i,j\in \{ 1,\ldots , d\}$,
\eqnb\label{010}
\sum_{\substack{|\alpha |\geq r \\ \alpha_2\geq 2}} \frac{|\alpha |^r}{|\alpha |!} \tau^{|\alpha |-r} \epsilon^{\alpha_2} \| \p^{(\alpha_1,\alpha_2-2)} \nabla (\p_j u \p_i v)  \|_{L^2} \lec_r \| u \|_{\widetilde{X}(\tau )} \| v \|_{\widetilde{X}(\tau )} ,
\eqne
\eqnb\label{011}
\sum_{|\alpha |\geq r } \frac{|\alpha |^r}{|\alpha |!} \tau^{|\alpha |-r} \epsilon^{\alpha_2} \| \p_1^{|\alpha|-1} (\p_j u \p_i v)  \|_{L^2} \lec_r \| u \|_{\widetilde{X}(\tau )} \| v \|_{\widetilde{X}(\tau )} ,
\eqne
\eqnb\label{012}
\sum_{|\alpha |\geq r } \frac{|\alpha |^r}{|\alpha |!} \tau^{|\alpha |-r} \epsilon^{\alpha_2} \| \p_1^{|\alpha|-1} ( u \cdot \nabla v)  \|_{H^1} \lec_r \| u \|_{\tX (\tau )} \| v \|_{\oY(\tau )} .
\eqne
Similarly,
\eqnb\label{013}
 \|  u \cdot \nabla v \|_{X(\tau )}
  \lec_r \| u \|_{\tX (\tau )}  \| v \|_{{\oY}(\tau )}
  .
%  +  \| u \|_{{\oY}(\tau )}  \| v \|_{\tX (\tau )} .
\eqne
\end{corollary}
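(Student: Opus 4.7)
The plan is to obtain all four estimates by the same Leibniz-expand / interpolate / weight-analysis strategy used in the proof of Lemma~\ref{L_product}: apply the Leibniz rule to the product inside the norm, interpolate the $L^\infty$ norms via \eqref{interp1}, redistribute the $\epsilon^{\alpha_2}$ factors as in \eqref{epsilon_est}, split the resulting double sum at $|\beta|=|\alpha|/2$, and check that the factorial coefficients $a_{\low},a_{\high}$ and the $\tau$-powers $b_{\low},b_{\high}$ (analogues of \eqref{calc_al}--\eqref{bh}) remain $O(1)$ or $O(\tau)$ in each region. The four estimates differ only in the outer-derivative order and in whether the Leibniz expansion includes the $\beta=0$ term (all top-order derivatives on one factor); per the remark following Lemma~\ref{L_product}, its presence forces the appearance of the $\oY$-norm in place of $\tY$ on the corresponding factor.

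For \eqref{010} and \eqref{011}, the outer derivative has order $|\alpha|-1$, while each factor $\p_j u$ and $\p_i v$ already carries one derivative; hence no term in the Leibniz expansion ever concentrates all $|\alpha|+1$ derivatives on a single factor. Both $u$ and $v$ therefore stay within the range of $\tX$ and no $Y$-norm is needed. In \eqref{010}, the hypothesis $\alpha_2\geq 2$ ensures that the exponent in $\p^{(\alpha_1,\alpha_2-2)}$ is non-negative, and the $\epsilon$-factors balance as in \eqref{epsilon_est}. The coefficient analysis then proceeds as in \eqref{calc_al}--\eqref{bh} and yields the symmetric bound $\|u\|_{\tX}\|v\|_{\tX}$.

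Estimate \eqref{012} adds one derivative to \eqref{011} via the $H^1$ norm. Writing $\|\p_1^{|\alpha|-1}(u\cdot\nabla v)\|_{H^1}\leq\|\p_1^{|\alpha|-1}(u\cdot\nabla v)\|+\|\p_1^{|\alpha|-1}\nabla(u\cdot\nabla v)\|$, the first summand is bounded as in \eqref{011}. In the Leibniz expansion of the second summand, there is now a $\beta=0$-type term of the form $u\cdot \p_1^{|\alpha|-1}\nabla^2 v$, in which $|\alpha|+1$ derivatives act on $v$. Bounding $u$ by $\|u\|_\infty\lec\|u\|_{\tX}$ (Sobolev, since $r\geq 3$) and reindexing the summation identifies exactly $\|v\|_{\oY}$, while the remaining terms stay within $\tX$, giving the asymmetric bound $\|u\|_{\tX}\|v\|_{\oY}$.

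For \eqref{013}, one writes $\p^\alpha(u\cdot\nabla v)=u\cdot\nabla\p^\alpha v + S_\alpha(u,v)$. The second summand is controlled directly by Lemma~\ref{L_product}, which together with $\tY\leq\oY$ for $\tau\leq 1$ gives $\|u\|_{\tX}\|v\|_{\oY}+\|u\|_{\oY}\|v\|_{\tX}$. The first summand corresponds to restoring the $\beta=0$ case to the low-$|\beta|$ branch of the proof of Lemma~\ref{L_product}; the factorial bounds $a_{\low}\lec_r 1$ and the powers-of-$\tau$ analysis survive at $\beta=0$, except that the $\beta=0$ term falls into the ``no extra $\tau$'' sub-case and yields $\|u\|_{\tX}\|v\|_{\oY}$ rather than $\|u\|_{\tX}\|v\|_{\tY}$. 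Adding the two contributions gives \eqref{013}. The main obstacle across all four estimates is the bookkeeping needed to verify that the reduced outer-derivative orders and the shifted indices still produce coefficients $a,b$ of the correct order on each case-split; given the template of Lemma~\ref{L_product}, this is essentially mechanical.
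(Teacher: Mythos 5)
Your proposal is correct and follows the same route as the paper, which simply notes that the corollary ``follows in the same way as that of Lemma~\ref{L_product}'' once one checks that the derivative counts agree. Your accounting of where the $\beta=0$ (all-derivatives-on-one-factor) term forces an $\oY$ norm versus where both factors stay in $\tX$ is exactly the bookkeeping the paper leaves implicit.
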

\colb

\begin{proof}(Sketch) The proof follows in the same way as that of Lemma~\ref{L_product}, and we only check the nonnegativity of the powers of~$\tau$.
%to convince the reader that the right-hand sides of \eqref{010}--\eqref{013} involve the correct spaces. 
For \eqref{010}, we set $\wa \coloneqq (\alpha_1 ,\alpha_2 -2 ) + e_k$, where $k\in \{ 1,2\}$. We have $|\wa |= |\alpha |-1$. For $\beta \leq  \wa$, we estimate
\eqnb\label{aab}
\| \p^\beta \p_j u \p^{\wa - \beta } \p_i v \|_{L^2} \leq \| \p^\beta \p_j u \|_{L^\infty } \| \p^{\wa - \beta } \p_i v \|_{L^2}  \lec \| \p^\beta \p_j u \|_{L^2 }^{1/2}\| D^d \p^\beta \p_j u \|_{L^2 }^{1/2}  \| \p^{\wa - \beta } \p_i v \|_{L^2} 
\eqne
in the low case (i.e.,  $|\beta |\leq\frac{|\wa |}2$),  and
\eqnb\label{aac}
\| \p^\beta \p_j u \p^{\wa - \beta } \p_i v \|_{L^2} \leq \| \p^\beta \p_j u \|_{L^2 } \| \p^{\wa - \beta } \p_i v \|_{L^\infty}  \lec \| \p^\beta \p_j u \|_{L^2 } \| D^d \p^{\wa - \beta} \p_i v \|_{L^2 }^{1/2}  \| \p^{\wa - \beta } \p_i v \|_{L^2}^{1/2} 
\eqne
in the high case (i.e.,  $|\beta |>  \frac{|\wa |}2$). Thus, aiming for two $\tX$ norms on the right-hand side, we see that $b_l = \tau^{E_l}$, $b_h=\tau^{E_h}$, where
\eqnb\label{aad}
\begin{split}
E_l &\coloneqq |\alpha | -r - \frac12 \left( (|\beta | + 1-r ) \vee 0 \right) - \frac12 \left( ( |\beta |+1+n -r ) \vee 0 \right) - \left( |\wa |-|\beta |+1 -r \right) \vee 0,\\
E_h &\coloneqq |\alpha | -r -  \left( (|\beta | + 1-r ) \vee 0 \right) - \frac12 \left( ( |\wa | -|\beta |+1 -r ) \vee 0 \right) - \frac12 \left( (|\wa |-|\beta |+1+n -r) \vee 0 \right).
\end{split}
\eqne
Now, we verify that $E_l,E_h\geq 0$ as long as $|\alpha |\geq 2d+1$. Recall from Theorem~\ref{T01} that  $r=d+1$. Thus, in the low case, $|\wa - \beta | \geq | \wa | - | \beta |  \geq |\wa | - \frac{|\wa |}2 = \frac{|\alpha |-1}{2} \geq d$. Therefore:\\
\emph{Case 1, $|\beta | \geq d $}\/: Then $E_l=|\alpha |-d-1- \frac12 (|\beta |-d ) - \frac12 |\beta | - (|\wa |- |\beta | -d) = \frac{d}2 \geq 0$;\\
\emph{Case 2, $|\beta | \leq d-1 $}\/: Then $E_l=|\alpha |-d-1 - \frac12 |\beta | - (|\wa |- |\beta | -d) = \frac{|\beta |}2 \geq 0$.

On the other hand, in the high case, we have $|\beta | > |\wa |/2 \geq d$, and thus\\
\emph{Case 1, $|\wa - \beta | \geq d $}\/: Then $E_h=|\alpha |-d-1-  (|\beta |-d ) - \frac12 \left( |\wa | -|\beta |-d \right) - \frac12 (|\wa |- |\beta |) = \frac{d}2 \geq 0$,\\
\emph{Case 2, $|\wa - \beta | \leq d $}\/: Then $E_h=|\alpha |-d-1-  (|\beta |-d )  - \frac12 (|\wa |- |\beta |) = \frac{|\wa | - |\beta |}2 \geq 0$, as required. For $a\coloneqq |\alpha |\leq 2d $ we observe that
  \begin{equation}
   \tau^{a-r} \| \p_j u \p_i v \|_{H^{a-1}} \lec \tau^{a-r} \| u \|_{H^r} \| v \|_{H^{a}} + \tau^{a-r} \| u \|_{H^a} \| v \|_{H^{r}} \lec  \| u \|_{\tX} \| v \|_{\tX} 
   \llabel{EQ14}
   \end{equation}
for any $a\geq r$ (and similarly for $a<r$). 

The inequality~\eqref{011} is similar to \eqref{010}; in particular, the powers of $\tau$ are the same as above. 
For \eqref{012}, we only estimate the $\dot H^1$ part, in which case we set $\wa \coloneqq (|\alpha |-1, 0 ) + e_k$, where $k\in \{1,2\}$. We proceed analogously to \eqref{aab}--\eqref{aac}, obtaining
\begin{equation}
\begin{split}
E_l &\coloneqq |\alpha | -r - \frac12 \left( (|\beta | -r ) \vee 0 \right) - \frac12 \left( ( |\beta |+d -r ) \vee 0 \right) - \left( |\wa |-|\beta |+1 -(r+1) \right) \vee 0,\\
E_h &\coloneqq |\alpha | -r -  \left( (|\beta | -r ) \vee 0 \right) - \frac12 \left( ( |\wa | -|\beta |+1 -(r+1) ) \vee 0 \right) - \frac12 \left( (|\wa |-|\beta |+1+d -(r+1)) \vee 0 \right),
\end{split}
   \label{EQ47}
     \end{equation}
where we used the fact that, compared to~\eqref{aad}, we now do not have $\p_i$ next to $u$, and we aim to obtain $\| v \|_{\oY}$ (rather than $\| v \|_{\tX }$), so that some terms have $(r+1)$ instead of~$r$. Next, we verify that $E_l,E_h \geq 0$ if $|\alpha |\geq 2d+2$. Indeed, in the low case, $|\alpha | - |\beta |\geq |\alpha |/2 \geq d+1$, and so:\\
\emph{Case 1, $|\beta |\geq d+1$}\/: Then $E_l = |\alpha | -r - \frac12 |\beta | + \frac{r}{2} - \frac12 |\beta | + \frac12 - |\alpha |+ |\beta |+ (d+1)= 1+ \frac{d}2 \geq 0$,\\
\emph{Case 2,  $|\beta |\leq d$}\/: Then $E_l = |\alpha | -r  - \frac12 |\beta | + \frac12 - |\alpha |+ |\beta |+ (d+1)=  \frac{|\beta |+1}2 \geq 0$.\\
In the high case, we have $ |\beta |> |\alpha |/2 \geq d+1$, and so:\\
\emph{Case 1, $|\alpha | -|\beta |\geq d+1$}\/: Then $E_h = |\alpha | -r -  |\beta | + {r} - \frac12 \left( |\alpha |- |\beta |- (d+1) \right)  - \frac12 \left( |\alpha |- |\beta |-1 \right) =  \frac{d+2}2 \geq 0$,\\
\emph{Case 2, $1\leq |\alpha | -|\beta |\leq d$}\/: Then $E_h = |\alpha | -r -  |\beta | + {r}  - \frac12 \left( |\alpha |- |\beta |-1 \right) =  \frac{|\alpha | - |\beta | +1 }2 \geq 0$,\\
\emph{Case 3, $ |\alpha | -|\beta |=0 $}\/: Then $E_h =0$, as required.
%For $|\alpha |\leq 2d+1$ we use Igor's trick. 
For $a\coloneqq |\alpha |\leq 2d+1 $, we observe that
\begin{equation}
\tau^{a-r} \| u \cdot \nabla v \|_{H^{a}} \lec \tau^{a-r} \| u \|_{H^r} \| v \|_{H^{a+1}} + \tau^{a-r} \| u \|_{H^a} \| v \|_{H^{r}} \lec  \| u \|_{\tX} \| v \|_{\oY}
,
   \llabel{EQ15}
     \end{equation}
for any $a\geq r$ (and similarly for $a<r$). The case of \eqref{013} is analogous to \eqref{012} addressed above.
\end{proof}
\colb

For future reference, we state the following general product
estimate for the $\tX$ analytic norm.

\begin{lemma}(product estimate in the flat case)\label{L01}
{With $r= d+1$}, we have
\begin{equation}
\Vert u v\Vert_{\tilde X(\tau)}
\lec_{r}
 \| u \|_{\widetilde{X}(\tau )} \| v \|_{\widetilde{X}(\tau )}
 ,
   \label{EQ16}
     \end{equation}
for all $\tau\in(0,1]$ and all scalar functions~$u$ and~$v$.
\end{lemma}

The proof follows the same steps as that of Corollary~\ref{cor_products}.
We emphasize that $r=d+1$ can be relaxed to $r>d/2$, using the same
method, as the reader can check.
Observe that the lemma can be immediately generalized to an arbitrary
number of factors, and then, using the Taylor series, one can deduce a
chain rule.
%Note that this algebra property then immediately extends to an
%arbitrary number of factors as well as to analytic functions of~$u$,
%where $u$ is a scalar or a vector function.

We can now estimate the pressure $p$, which satisfies the elliptic problem 
\begin{align}
  \begin{split}
   &\Delta p
     =
       -\nabla (v+\ou ) : \nabla (v+\ou ) 
    \inon{in $\Omega$}
    \\&
    \partial_{\mathsf{n}}p
    =
   - \ou \cdot  \nabla (\ou_2 + v_2) 
    - v_1 \p_1 \ou_2-\p_t \ou_2 \inon{on $\partial\Omega$}
    ,
    \end{split}
   \label{EQ69}
  \end{align}
where we set
  \begin{equation}
%   \nabla u : \nabla v
   A : B
   =
   A_{ij}  B_{ji}
%   \partial_{i} u_j \partial_{j} v_i
   .
   \label{EQ07}
  \end{equation}
\colb  

The next lemma provides a general result on how to estimate the analytic norm of~$\na p$.

\cole%%%out
\begin{lemma}[The pressure estimate in the analytic norm]
\label{L02}
Consider the solution $p$ of the Neumann problem
  \begin{align}
  \begin{split}
   &\Delta p = \nabla f_1 : \nabla f_2\hspace{0.45cm}
   \inon{in $\Omega$},
   \\&
   \partial_{2} p = (g_1\cdot \nabla g_2)|_{\partial\Omega} 
   \inon{on $\partial\Omega$}
   ,
  \end{split}
   \label{EQ25}
  \end{align}
with the normalizing condition $\int_\Omega p= 0$,
where $g$ is defined in $\Omega$,
with the compatibility condition
  \begin{equation}
\int \nabla f_1 : \nabla f_2
=
\int_{\partial\Omega}
  (g_1\cdot \nabla g_2)
       \cdot \mathsf{n}
   .
   \label{EQ115}
  \end{equation}
Then we have
  \begin{align}
  \begin{split}
   &
       \Vert \nabla p\Vert_{X(\tau )}
   \lec
    \Vert f_1\Vert_{\tX(\tau)}
    \Vert f_2\Vert_{\tX(\tau)}
    +  \Vert g_1 \Vert_{\tX(\tau)}\Vert g_2 \Vert_{\oY(\tau)}
  ,
  \end{split}
   \label{EQ27}
  \end{align}
provided $\epsilon >0$ is sufficiently small.
\end{lemma}
\colb

Note that the first equation in \eqref{EQ25}
does not loose
a derivative (so that we have two $\tX$ norms in \eqref{EQ27}), while the second equation in \eqref{EQ25}
does,
so that one of the norms becomes~$\oY$.

\begin{proof}[Proof of Lemma~\ref{L02}]
Denote the left-hand side of \eqref{EQ27} by~$I$.
Suppose that $\alpha_2\geq 2$. Then we write
\eqnb\label{transfer}
\p^\alpha \p_j p = (\Delta - \p_1^2 ) \p_1^{\alpha_1} \p_2^{\alpha_2-2} \p_j p = \p_1^{\alpha_1} \p_2^{\alpha_2-2}\p_j (\nabla f_1  \nabla f_2)- \p^{\overline{\alpha }} \p_j p
,
\eqne
for $j=1,2$, where 
\begin{equation}
\overline{\alpha } \coloneqq (\alpha_1+2 , \alpha_2-2 )
;
   \label{EQ49}
     \end{equation}
we also used a symbolic notation, omitting ``$:$'' and non-significant indices.
By \eqref{010}, we have
\eqnb\label{006}
\sum_{j=1,2}\sum_{\substack{|\alpha |\geq r \\ \alpha_2\geq 2}} \frac{|\alpha |^r }{|\alpha |! } \tau^{|\alpha |-r} \epsilon^{\alpha_2}\left(  \| \p^{(\alpha_1 ,\alpha_2-2) } \p_j (\na f_1 \na f_2 )\| + \| \p^{\overline{\alpha}} \na p\| \right) \lec \| f_1 \|_{\tX(\tau )} \| f_2 \|_{\tX(\tau )} +  \epsilon^2 \| \na p \|_{X(\tau )},
\eqne
where we used $|\overline{\alpha }|=|\alpha |$ in the last inequality.

If $\alpha_2 = 1$, we apply the $H^2$ estimate to \eqref{EQ69} to obtain
  \begin{equation}
   \| \p^\alpha \p_j p \|\leq \| \p_1^{\alpha_1} p \|_{H^2} \leq \| \p_1^{\alpha_1} (\na f_1 \na f_2 ) \| + \| \p_1^{\alpha_1} (g_1\cdot \nabla g_2) \|_{H^1 }
   \llabel{EQ10}
   \end{equation}
and similarly $\| \p^\alpha \p_j p \| \leq  \| \p_1^{\alpha_1-1} (\na f_1 \na f_2 ) \| + \| \p_1^{\alpha_1-1} (g_1\cdot \nabla g_2) \|_{H^1 }$ if~$\alpha_2=0$.
Using \eqref{010}--\eqref{012}, we thus derive
\begin{equation}\begin{split}
\sum_{j=1,2}\sum_{\substack{|\alpha |\geq r \\ \alpha_2\leq 1}} \frac{|\alpha |^r }{|\alpha |! } \tau^{|\alpha |-r} \epsilon^{\alpha_2} \| \p^\alpha \p_j p \| &\lec \sum_{j=1,2}\sum_{\substack{|\alpha |\geq r \\ \alpha_2\leq 1}} \frac{|\alpha |^r }{|\alpha |! } \tau^{|\alpha |-r} \epsilon^{\alpha_2}
\left(  \| \p_1^{|\alpha |-1}  (\na f_1 \na f_2 )\| + \| \p_1^{|\alpha_1 |-1 } (g_1\cdot \nabla g_2 )\|_{H^1} \right)\\
&\lec \| f_1 \|_{\tX(\tau )} \| f_2 \|_{\tX(\tau )} + \| g_1 \|_{\tX(\tau )} \| g_2 \|_{\oY(\tau )}.
\end{split}
   \label{EQ50}
     \end{equation}
This and \eqref{006} give
\begin{equation}
\| \na p \|_{X(\tau )} \lec   \| f_1 \|_{\tX(\tau )} \| f_2 \|_{\tX(\tau )} + \| g_1 \|_{\tX(\tau )} \| g_2 \|_{\oY(\tau )} + \epsilon^2 \| \na p \|_{X(\tau )} ,
   \llabel{EQ09}
     \end{equation}
and absorbing the last term by the left-hand side gives the claim.
\end{proof}

As for the term $\p_t \ou$ in the equation~\eqref{EQ69} for $p$, we can adjust the above proof to the case of simpler boundary conditions. 

\begin{corollary}\label{cor_extra_p_flat}
Suppose that $p$ is a solution to
\eqnb
\begin{split}
\Delta p &=0 \quad \text{ in }\Omega,\\
\p_2 p &=f \quad \text{ on }\p \Omega,
\end{split}
\eqne
where
$\int_{\p \Omega } f =0$.
Then
\begin{equation}
\| \na p \|_{\tX } \lec \| f \|_{\tX }
.
   \label{EQ51}
     \end{equation}
\end{corollary}

\begin{proof}[Proof of Corollary~\ref{cor_extra_p_flat}]
Similarly as in the above proof, we have that if $\alpha_2 \geq 2$, then $\p^\alpha \p_j p =\p^{\overline{\alpha }} \p_j p$ for $j=1,2$, where $\overline\alpha$ is as in~\eqref{EQ49}. If $\alpha_2=1$, then the $H^2$~estimate gives
\begin{equation}
\| \p^\alpha \p_j p \| \leq \| \p^{\alpha_1}_1 p \|_{H^2} \leq \| \p^{\alpha_1}_1 f \|_{H^{1/2} (\Omega )} \lec \| \p^\alpha f \| + \| \p_1^{\alpha_1+1} f \| + \| \p_1^{\alpha_1} f \| ,
   \label{EQ52}
     \end{equation}
and similarly, if $\alpha_2=0$ and $|\alpha |\geq r$, then
\begin{equation}
\| \p^\alpha \p_j p \|  \lec \| \p^{\alpha_1-1}_1 \p_2 f \| + \| \p_1^{\alpha_1} f \| + \| \p_1^{\alpha_1-1} f \|
.
   \label{EQ53}
     \end{equation}
Therefore,
\begin{equation}
\sum_{j=1,2}\sum_{|\alpha |\geq r } \frac{|\alpha |^r }{|\alpha |! } \tau^{|\alpha |-r} \epsilon^{\alpha_2} \| \p^\alpha \p_j p \| \lec \epsilon^2 \| \nabla p \|_{\tX} + \| f \|_{\tX},
   \label{EQ54}
     \end{equation}
since $\epsilon \leq 1$, $\tau \leq \tau_0 \lec 1$, and $\frac{|\alpha |^r }{|\alpha |!} \lec_r \frac{(|\alpha |-1)^r}{(|\alpha |-1 )!}$, and the claim follows.
\end{proof}

\colb
Recalling the boundary value problem \eqref{EQ69} for $p$, the pressure estimates from Lemma~\ref{L02}, and Corollary~\ref{cor_extra_p_flat} yield
\eqnb\label{009}
\| \nabla p \|_{\tX }  \lec_r \| v+\ou \|_{\tX } \| v+\ou \|_{\oY }  + \| \p_t \ou \|_{\tX}
.
\eqne
This inequality and the product estimate from Lemma~\ref{L_product} allow us to sum the bounds \eqref{EQ08} on the time evolution of $\|\p^\alpha v \|$ to obtain
\begin{equation}\begin{split}
\frac{\d }{\d t } \| v \|_{X} &- \dot \tau \| v \|_{Y} = \sum_{|\alpha |\geq r} \frac{|\alpha |^r }{|\alpha |!} \tau^{|\alpha |-r} \epsilon^{\alpha_2}\frac{\d }{\d t }  \| \p^\alpha v \| \\
& \lec_r  \| v+\ou  \|_{\widetilde{X}} \| v+\ou \|_{\tY }+\| \ou  \|_{\widetilde{X}} \| v+\ou \|_{\oY }
\\&
+\| \ou  \|_{\oY } \| v+\ou \|_{\tX }+  \| \ou \|_\infty  (\| v \|_{\widetilde{X} } + \| \nabla v \|_{\widetilde{X}} ) +  \| v +\ou\|_{H^r} \| \nabla \ou \|_{\widetilde{X}} + \| \p_t \ou \|_{\tX }  \\
&\lec_r \| v \|_{\widetilde{X}} \| v \|_{\tY} + \| \ou \|_{\overline{Y}} \| v \|_{\widetilde{X}} + \| \ou \|_{\widetilde{X}} \| v \|_{\overline{Y}}+ \| \ou \|_{\widetilde{X}} \| \ou \|_{\overline{Y}}  + \| \p_t \ou \|_{\tX } ,
   \label{EQ55}\end{split}
     \end{equation}
proving the desired a~priori estimate \eqref{002}, where we also used \eqref{grad_in_X_is_Y} in the last line.

\subsection{Proof of Theorem~\ref{T01}}\label{sec_flat_conclude}
We can now use the a~priori estimate \eqref{002} to complete the proof of Theorem~\ref{T01}.
Recall that  $\tau \colon [0,T_0] \to [0,1]$ is given by \eqref{def_tau}, so that $\dot \tau = - M$, where $M>0$.
We show that
for every $v_0 \in X(\tau(0))\cap Y(\tau (0))$,
there exist sufficiently large $M>1$, sufficiently small $T_0>0$, and a unique local-in-time solution $v \in C([0,T_0]; \widetilde{X}(\tau ))$
to \eqref{intro_euler_v} satisfying
\eqnb\label{bound_on_construction}
\sup_{t\in [0,T_0 ]} \| v \|_{\widetilde{X}} + M \int_0^{T_0}  \| v \|_{\overline{Y}} \leq A,
\eqne
where 
\eqnb\label{def_A}
A\coloneqq 4  \| v_0 \|_{\tX (\tau (0))} +  \sup_{t\in [0,1]} \| \ou \|_{\overline{Y}(\tau )} +2 \int_0^t \| \p_t \ou \|_{\tX}.
\eqne
We consider a sequence $v^{(n)}$ defined by 
\begin{align}
  \begin{split}
   &
   \partial_{t} \vnp
     + (\vn+\ou )\cdot \nabla (\vn +\ou )
     + \nabla \pn+\p_t \ou  = 0
    ,
  \end{split}
   \label{EQ32}
  \end{align}
where
\begin{align}
  \begin{split}
   &\Delta \pn
     =
       -\nabla (\vn+\ou ) : \nabla (\vn +\ou ) 
    \inon{in $\Omega$}
    \\&
   \p_2  \pn
    =
    (
       \vn_i - \ou_i )\cdot \na (\vn_j - \ou_j )_2 -\p_t \ou_2
    \inon{on $\partial\Omega$}
    ,
    \end{split}
   \label{EQ34}
  \end{align}
with the initial iterate
\begin{align}
  \begin{split}
   &v^{(0)} \coloneqq v_0 
   .
  \end{split}
   \label{EQ33}
  \end{align}
We first observe that 
  \begin{equation}
   \nabla \cdot \vn=0
   \inon{in $\Omega$}
   \label{EQ42}
  \end{equation} 
and 
   \begin{equation}
   \vn_2=0   
   \inon{on $\partial\Omega$}
   ,
   \label{EQ71}
  \end{equation}
for all $n\in\mathbb{N}$,
thanks to~\eqref{EQ34}. In order to send $n\to \infty$, we first show that if $M\geq1$ is sufficiently large and $T_0>0$ is sufficiently small, then the a~priori bound \eqref{bound_on_construction} holds for all iterates, i.e.,
\eqnb\label{unif_bounds}
\sup_{t\in [0,T_0 ]} \| \vn \|_{\widetilde{X}} + M \int_0^{T_0}  \| \vn \|_{\overline{Y}} \leq A
,
\eqne
for all $n\geq 0$. 
As in \eqref{002}, we see that
\eqnb\label{014}
\frac{\d }{\d t} \| \vnp \|_{\widetilde{X}} +M \| \vnp \|_{\oY} \lec_r \left( \| \vn \|_{\tX}  + \| \ou \|_{\tX } \right) \left( \| \vn \|_{\oY}  + \| \ou \|_{\oY } \right)   + \| \ou \|_{\overline{Y}}^2 + \|\p_t \ou \|_{\tX},
\eqne
for every $n\geq 1$. Note that we have $\| \cdot \|_{\overline{Y}}$ instead of $\| \cdot \|_{\widetilde{Y}}$ (as in \eqref{002}), since there is no cancellation in the nonlinear terms of \eqref{EQ32} when tested against~$\vnp$; namely, we use \eqref{013} instead of the product estimate (Lemma~\ref{L_product}). It is an advantage of the analytic approach that this does not matter.

The claim \eqref{unif_bounds} is trivial for $n=0$, and, for $n\geq 1$, \eqref{014} gives
\begin{equation}\begin{split}
\sup_{t\in [0,T_0 ]} \| \vnp \|_{\widetilde{X}} &+ M \int_0^{T_0}  \| \vnp \|_{\oY} \leq \left( 1+ {T_0}M \right) \sup_{t\in [0,T_0 ]} \| \vnp \|_{\widetilde{X}} + M \int_0^{T_0}  \| \vnp \|_{Y} \\
&\leq 2 \| v_0 \|_{X(\tau (0))} +C_0\sup_{t\in [0,T_0]}  \left( \| \vn  \|_{\widetilde{X}}+\|  \ou \|_{\widetilde{X}}\right) \,\,\ \int_0^{T_0} \left( \| \vn \|_{\overline{Y}} +  \| \ou  \|_{\overline{Y}} \right) + \int_0^{T_0} \| \p_t \ou \|_{\tX } \\
&\leq 2 \| v_0 \|_{X(\tau (0))}  + 2 C_0 A^2 (M^{-1} + T_0) + \int_0^{T_0} \| \p_t \ou \|_{\tX } \leq A,
   \label{EQ56}\end{split}
     \end{equation}
provided
\eqnb\label{choice_M_T0}
M \geq 12C_0 A \qquad \text{ and }\qquad T_0 \leq \min \{1,\tau_0 \} /M.
\eqne
%\eqnb\label{choice_M_T0_old}
%M \geq 8C_0 A \qquad \text{ and }\qquad T_0 \leq \tau_0 /M.
%\eqne

Note that, due to the linear choice \eqref{def_tau} of $\tau$, the choice of $T_0$ also implies that $\tau \geq 0$ for $t\in [0,T_0]$, thus leading to~\eqref{unif_bounds}.

 Now, we provide some details for the fixed point argument.
Subtracting \eqref{EQ32} for $n$ and $n-1$ and setting
\begin{equation}
w^{(n+1)} \coloneqq \vnp -\vn
,
   \label{EQ57}
     \end{equation}
we get
  \begin{align}
  \begin{split}
   &
   \partial_{t} w^{(n+1)}
      + (\vn +\ou ) \cdot\nabla  w^{(n)} + w^{(n)} \cdot \nabla (\vnm +\ou )      + \nabla (\pn - \pnm) = 0
    .
  \end{split}
   \label{EQ48}
  \end{align}
As in \eqref{014}, we obtain
\begin{equation}\begin{split}
\frac{\d }{\d t} \| w^{(n+1)} \|_{\widetilde{X} } + M \| w^{(n+1)}\|_{\overline{Y} }&\lec \| w^{(n)}  \|_{\widetilde{X} } \left( \| \vn \|_{\overline{Y} } +\| \vnm  \|_{\overline{Y} }  \right)+ \left( \| \vn \|_{\widetilde{X} }+\| \vnm \|_{\widetilde{X} } \right)  \| w^{(n)} \|_{\overline{Y} }  \\
&\quad +\| w^{(n)} \|_{\widetilde{X} } \| \ou \|_{\overline{Y} } +\| \ou \|_{\widetilde{X} } \| w^{(n)} \|_{\overline{Y} }.
\end{split}    
   \label{EQ58}
     \end{equation}
Setting 
\begin{equation}
a_n \coloneqq \sup_{t\in [0,T_0]}\| w^{(n)} \|_{\widetilde{X} },\qquad b_n \coloneqq \int_0^{T_0} \| w^{(n)} \|_{\overline{Y} },
   \label{EQ59}
     \end{equation}
we may integrate on $[0,t]$, for $t\in [0,T_0]$ to obtain
\eqnb\label{015}\begin{split}
 a_{n+1}+ Mb_{n+1} &\lec a_{n} \frac{2A}{M}+ 2A b_{n} + a_{n} \int_0^{T_0} \| \ou \|_{\overline{Y}} + b_{n} \sup_{t\in [0,T_0]} \| \ou \|_{\widetilde{X}} \\
 &\lec (a_{n} + Mb_{n} ) \frac{4A }M,
 \end{split}
\eqne
where we also recalled \eqref{choice_M_T0} that
$T_0\leq 1/M$. This shows that, choosing $M$ sufficiently large, and then $T_0$ sufficiently small, we have
that 
\begin{equation}
\{ \vn \}  \text{ is Cauchy in } C^0 \left( [0,T_0]; X(\tau(t)) \right)
,
   \label{EQ60}
     \end{equation}
and so $\vn \stackrel{n\to \infty}{\longrightarrow} v$ in $C^0 \left( [0,T_0]; X(\tau(t)) \right)$ for some $v \in C^0 \left( [0,T_0]; X(\tau(t)) \right)\cap L^1 \left( (0,T_0); Y(\tau (t)) \right)$, where we also used Fatou's Lemma. The a~priori bound \eqref{bound_on_construction} follows by taking the limit $n\to\infty$ of \eqref{unif_bounds} and applying Fatou's Lemma, and, due to the strong mode of convergence, the impermeability conditions \eqref{EQ42} and the divergence-free conditions \eqref{EQ71}, the function  $v $ is a solution to~\eqref{intro_euler_v}.

Thanks to the a~priori bound \eqref{bound_on_construction}, the  uniqueness follows similarly to \eqref{015}
in a standard manner.

\section{The general case}
\label{sec3}
In this section, we prove Theorem~\ref{T01} in full generality. 

\subsection{Preliminaries}\label{sec_prelims}

Here, we define tools that are needed specifically for handling the general domain. We say that a domain $\Omega$ is analytic if it is locally a graph of an analytic function.

We introduce the Komatsu convention~\cite{K1} on derivatives,
\eqnb\label{komatsu_not}
\p^i \coloneqq \bigsqcup_{\alpha \in \{ 1,2\}^i} D^\alpha u,
\eqne
where 
\begin{equation}
D^\alpha u \coloneqq \p_{\alpha_1} \ldots  \p_{\alpha_i} u.
   \label{EQ63}
     \end{equation}
For example, $\p^2$ is a list of four second order derivatives.
Note that mixed derivatives are included in~$\p^i$ multiple times. 
As a consequence of this notation, 
\begin{equation}
\| D^i u \| = \sum_{\alpha \in \{ 1,2\}^i} \| D^\alpha u \|.
   \label{EQ64}
     \end{equation}
Moreover, we have the product rule 
\eqnb\label{komatsu_product}
\p^i (f g) - f \p^i g = \sum_{k=1}^i {i \choose k } (\p^k f ) (\p^{i-k} g ),
\eqne
where the last product denotes the tensor product of derivatives which is consistent with \eqref{komatsu_not}
(see (5.1.3) in Komatsu~\cite{K1}). The product rule \eqref{komatsu_product} is a consequence of the  identity
\begin{equation}
\sum_{\substack{\alpha' \subset \alpha \\ |\alpha' |=k }} {\alpha \choose \alpha' } = {m \choose k},
   \label{EQ66}
     \end{equation}
for each $m\in \N$, $k\in \{ 0 , \ldots , m \}$, and~$|\alpha |=m$.

We say that a vector field $T=\sum_{i=1}^n a_i \p_i$, where $a_i \colon \overline\Omega \to \R$ are analytic, is a \emph{tangential operator} to $\p \Omega$ if $T$ is a analytic vector field such that $T\delta =0$ on $\p \Omega$, where $\delta (x) $ is the distance function to the boundary $\p \Omega$, taking positive values inside $\Omega$ and negative outside; see \cite[Section~2.1]{CKV}
and \cite{JKL2}
for extensive discussions on tangential operators. We will abuse the notation, and denote by $T$  the tensor of a complete system of tangential derivatives, namely a system such that  the components of~$T$ span the tangent space at $x$, for each $x\in \p \Omega $; see  \cite[Theorem~3.1]{JKL2} for details. The system is chosen depending on the domain~$\Omega$. Recall from~\cite{JKL2} that we can use one tangential derivative in~2D and three in~3D, with a constant number $K\in \N$ in higher space dimensions.

We denote by 
\[
T^j \coloneqq \bigsqcup_{\beta \in \{ 1,\ldots , K\}^j} T^\beta u,
\]
the $j$-th tangential derivative, where we used Komatsu notation \eqref{komatsu_not} for the complete system $T=(T_1,\ldots , T_K)$ of tangential derivatives.

Note that we have
\eqnb\label{n_is_ana}
\mathsf{n} \in \tX ( \tau_0; \Omega')
\eqne
(see \cite{KP}), where $\Omega' \subset \Omega$ is some neighborhood of $\partial \Omega$ in $\Omega$, and $\| f\|_{\tX (\tau ; \Omega')}\coloneqq \|  f\|_{X (\tau ; \Omega')}+ \| f \|_{H^3(\Omega')}$,
\begin{equation}
\| f\|_{X(\tau;\Omega')} \coloneqq \sum_{i+j\geq r} \frac{(i+j)^r}{(i+j)!} \tau^{i+j-r}  \| \p^i T^j f \|_{L^2 (\Omega')}
   \label{EQ61}
     \end{equation}
%     \textbf{(WO: in \eqref{EQ61} I wrote $c_{i,j}$ explicitly, so that for $X(\tau , \Omega')$ we take $\overline{\epsilon}=\epsilon=1$)}\\
denote the analytic spaces $\tX$, $X$ on the subdomain~$\Omega'$.

We also assume that 
\eqnb\label{D2phi_is_ana}
D^2 \phi \in \tX (\tau_0;\Omega')
,
\eqne
for some neighborhood $\Omega'$ of $\p\Omega$ in $\Omega$, where $\phi(x) \coloneqq \mathrm{dist}\,(x,\p \Omega)\colon \overline{\Omega} \to [0,\infty )$ denotes the signed distance function to~$\p \Omega$.

In the next statement, we recall the commutator estimates between $T^j$ and partial derivatives of order $1$ or~$2$.

\cole
\begin{lemma}[Commutator estimates]\label{L_com_curved}
We have that
\begin{equation}
[T^j, \p_k ] = \sum_{i=1}^d b_{j}^i \p_i
   \label{EQ62}
     \end{equation}
and
\eqnb\label{202}
[T^j , \Delta ] = \sum_{l=1}^j \sum_{|\alpha |=l} \frac{j!}{\alpha! (j-l)!} b_{\alpha_1} \cdot \nabla (b_{\alpha_2 } \cdot \nabla T^{j-l})   ,
\eqne
where
\eqnb\label{bounds_b}
\max_{|\alpha |=k } \| \p^\alpha b_j^i \|_\infty \lec (k+j)! K_1^k K_2^j
,
\eqne
for each $i=1,\ldots , d$, and $b_{\alpha_1},b_{\alpha_2}$ satisfy estimates analogous to~\eqref{bounds_b}.
\end{lemma}
\colb

\begin{proof}[Proof of Lemma~\ref{L_com_curved}]
See \cite[Lemma~3.5]{CKV} (or \cite[Section~5]{K1}) and \cite[p.~3385]{CKV}, respectively.
\end{proof}

Using the tangential operator, we can estimate the pressure function using the Komatsu notation: If $\Delta p = f$ in $\Omega$, then 
\[
\| p \|_{H^2} \lec \| f \| + \| p \|_{H^{3/2} (\p \Omega)} \lec \| f \| + \| T p \|_{H^{1/2} (\p \Omega)} + \|  p \|_{H^{1/2} (\p \Omega)} + \lec \| f \| + \| T p \|_{H^{1}} + \|  p \|_{H^{1}} 
\]
and so, interpolating $H^1$ between $H^2$ and $L^2$, 
\eqnb\label{H2_est}
\| p \|_{H^2} \lec \| f \| + \| T p \|_{H^1} + \| p \|.
\eqne
In particular, for every $i\geq 2,j\geq 0$, we may write $\Delta (\p^{i-2} T^j p) = \p^{i-2} T^j \Delta p  + \p^{i-2} [ \Delta , T^j] p$, so that \eqref{H2_est} gives
\eqnb\label{i2jany}
\| \p^i T^j p \| \lec \| \p^{i-2} T^j \Delta p \|_{H^2} + \| T \p^{i-2} T^{k} p \|_{H^1} + \| \p^{i-2} T^k p \| + \| \p^{i-2} [\Delta ,T^j ] p\|.
\eqne
Similarly, for $i=1$, $j\geq 1$ we have $\| \nabla T^j p \| \lec \| T^{j-1} p \|_{H^2}$, and so
\eqnb\label{i1j1}
\| \na T^j p \| \leq \| T^{j-1} \Delta p \| + \| T^j p \|_{H^1} + \| p \| + \| [\Delta , T^j ] p \|.
\eqne

\subsection{Proof of Theorem~\ref{T01}}\label{sec_curved}

Thanks to the Komatsu notation \eqref{komatsu_not}, the claim of Theorem~\ref{T01} follows in the same way as in the flat case (recall Section~\ref{sec_flat_conclude}) by replacing the analytic norms \eqref{EQ19} for the flat domain by the general analytic norms \eqref{EQ19c}, and by observing  the product estimate  
\eqnb\label{product_curved}
\sum_{i+j\geq r }c_{i,j}   \| S_{i j} (u,v)\|  \lec_r \| v \|_{\widetilde{Y}} \| u \|_{\widetilde{X}} +\| v \|_{\widetilde{X}} \| u \|_{\widetilde{Y}},
\eqne
where \begin{equation}
S_{i j} (v,w ) \coloneqq \p^i  T^j \left( (v\cdot \nabla )w 
   \label{EQ67}\right) - v\cdot \nabla \p^i T^j w,
     \end{equation}
and the pressure estimate
\eqnb\label{pres_gen}
\| \na p \|_{\widetilde{X}} \lec \| v \|_{\tX }\| v  \|_{\tX } + \| v \|_{\tX }\| \ou  \|_{\oY } +\| \ou  \|_{\tX }\| v  \|_{\oY } + \| \ou \|_{\tX }\| \ou  \|_{\oY } + \| \p_t \ou \|_{\tX},
\eqne
which we prove in Section~\ref{sec_prod_gen} and Section~\ref{sec_pressure_gen} (respectively) below. In particular, the a~priori estimate \eqref{002} follows by noting that $\| v \|_{\tX } \lec  \| v \|_{\tY}$.

\subsection{Product estimate: the general case}\label{sec_prod_gen}

Here we show~\eqref{product_curved}.  We start by using the product rule \eqref{komatsu_product} to  extract the commutators,
\begin{equation}
\begin{split}
\| S_{i j } (v,w ) \| &
\leq   \|  \p^i \p_k T^j (v_k w)  - v_k \p_k \p^i T^j w\|+ \| \p^i [T^j , \p_k ] (v_kw) \|\\
&\lec \sum_{(n , l) \ne 0} {i \choose n } {j \choose l}  \|   \p^n T^l v_k \p_k \p^{i - n } T^{j-l} w  \|+  \sum_{p=0}^i  {i\choose p} \| \p^{i-p} b_j  \p^{p+1} (v w)  \| \\
&=:  A+B,
\end{split}
   \label{EQ68}
     \end{equation}
where we write ``$ (n , l )\ne 0 $'' instead of ``$ n \in \{ 0, \ldots , i\}, l\in \{ 0,\ldots , j \}, (n , l) \ne 0$,'' for brevity.

For the main product term $A$, we have 
\begin{equation}
\begin{split}
\|   \p^n T^l v \p_k \p^{i-n } T^{j-l} w  \| &\leq  \| \p^n T^l v \|_{\infty } \| \p^{i-n +1 } T^{j-l} w \|\\
&\leq     \| \p^{n } T^l v \|  \| \p^{i - \beta +1 } T^{j-l} w \|  +\| \p^{n } T^l v \|^{1/2}  \| \p^{n +2 } T^l v \|^{1/2}  \| \p^{i - n +1 } T^{j-l} w \|  
,
\end{split}
   \label{EQ70}
     \end{equation}
for  $n+ l \leq (i +j )/2$, and similarly, 
\begin{equation}
\begin{split}
\|
&( \p^n T^l v \cdot   \nabla )\p^{i-n } T^{j-l} w  \| \leq \| \p^n T^l v \| \| \p^{i-n +1 } T^{j-l} w \|_{\infty } \\
&\indeq\leq    \| \p^{n } T^l v \|  \| \p^{i -n  +1 } T^{j-l} w \|  +\| \p^{n  } T^l v \|  \| \p^{i-n  +3} T^{j-l} w \|^{1/2} \| \p^{i-n +1  } T^{j-l} w \|^{1/2}    .
\end{split}
   \label{EQ72}
     \end{equation}
We shall use the notation
\eqnb\label{def_F}\begin{split}
F_{i,j} (u )& \coloneqq \begin{cases} \frac{\overline\epsilon^{i}\epsilon^{j} (i+j)^r \tau^{i+j -r }}{(i+j)!} \| \p^i T^j u \| \qquad &i+j\geq r ,\\
\| \p^i T^j u \| &i+j <r,
\end{cases}\\
 G_{i,j} (u) &\coloneqq \begin{cases} \frac{\overline\epsilon^{i}\epsilon^{j } (i+j)^{r+1} \tau^{i+j -r -1}}{(i+j)!} \| \p^i T^j u \| \qquad &i+j\geq r+1 ,\\
 \| \p^i T^j u \| &i+j\leq r,
 \end{cases}
\end{split}\eqne
so that $\sum_{i,j \geq 0 } F_{i,j} (u) \lec \| u \|_{\widetilde{X} (\tau )}$ and $\sum_{i,j \geq 0 } G_{i,j} (u) \lec \| u \|_{\widetilde{Y} (\tau )}$. We obtain
\begin{equation}
\begin{split}
\sum_{i+j \geq r} c_{i,j} A &\lec  \sum_{i+j\geq r }\biggl(  \sum_{0< n+l\leq (i+j)/2 } a_{\low } b_{\low } F_{nl}(u)^{1/2} F_{n+1,l}(u  )^{1/2} G_{(i-n+1)(j-l)} (v)
%\right.
\\
&\hspace{3cm}
%\left.
+\sum_{ n+l> (i+j)/2 } a_{\high } b_{\high } G_{nl} (u) F_{i-n+1,j-l} (v)^{1/2} F_{i-n+3,j-l }(v)^{1/2} \biggr),
\end{split}
   \label{EQ73}
     \end{equation}
where the terms involving $\overline{\epsilon}$, $\epsilon$ and low and high coefficients $a_{\low}$, $a_{\high}$ of factorials are bounded by a constant $C_r$ in the same way as in \eqref{epsilon_est}--\eqref{calc_ah}, and the low and high coefficients of powers of $\tau$ are defined by 
\begin{equation}
\begin{split}
b_{\low } &\coloneqq \frac{\tau^{i+j-r}}{\tau^{((n+l-r)\vee 0)/2}\tau^{((n+l+2-r)\vee 0)/2} \tau^{(i+j-n-l-r)\vee 0} },\\
b_{\high } &\coloneqq \frac{\tau^{i+j-r}}{\tau^{(n+l-r-1)\vee 0}\tau^{((i+j-n-l+1-r)\vee 0)/2} \tau^{((i+j-n-l+3-r)\vee 0)/2} } 
.
\end{split}
   \label{EQ74}
     \end{equation}
As in \eqref{bl}--\eqref{bh}, one can verify that 
\begin{equation}
b_{\low}  \lec \begin{cases} 
\tau \qquad &\text{ if } i+j-n-l\geq r, \\
1 &\text{ if } i+j-n-l \leq r-1,
\end{cases} \qquad 
b_{\high } \lec \begin{cases} 
\tau \qquad &\text{ if }n+l\geq r+1, \\
1 &\text{ if } n+l\leq r
,
   \label{EQ75}\end{cases}\hspace{0.8cm}
     \end{equation}
and thus we obtain
\eqnb\label{claim_A}
\begin{split}
\sum_{i+j \geq r } \frac{(i+j)^r}{(i+j)!}\tau^{i+j-r } \overline{\epsilon}^i \epsilon^{j} A &\lec \| v \|_{\widetilde{Y}(\tau )} \| u \|_{\widetilde{X}(\tau )} +\| v \|_{\widetilde{X}(\tau )}\| u \|_{\widetilde{Y}(\tau )},
\end{split}
\eqne
by the same calculation as \eqref{calc001} upon replacing $|\alpha |$, $|\beta |$ by $i+j$ and $n+l$, respectively. 

For the commutator term $B$, we use Lemma~\ref{L_com_curved} to obtain
\begin{equation}
\begin{split}
\| \p^{i-p} b_j  \p^{p+1} (v_k w)  \| &\leq \| \p^{i-p} b_j  \|_\infty  \sum_{n=0}^{p+1} {p+1 \choose n}  \| \p^n v_k \|_\infty  \| \p^{p+1-n } w   \| \\
&\lec (i-p+j)! K_1^{i-p} K_2^{j}  \sum_{n=0}^{p+1} {p+1 \choose n}  \| \p^n v_k \|_\infty  \|\p^{p+1-n } w   \|  
.
\end{split}
   \label{EQ76}
     \end{equation}
Noting that $\| \p^n v_k \|_\infty \lec \| \p^n v_k \|^{1/2} \| \p^{n+2} v_k \|^{1/2} +\| \p^n v_k \|  $ and 
\begin{equation}
\| \p^k v \|\lec F_{k,0} (v) \frac{k!}{k^r \overline{\epsilon }^n \tau^{(k-r)\vee 0} },
   \label{EQ77}
     \end{equation}
we deduce
\eqnb\label{curved_01}
\begin{split}
\sum_{i+j \geq r } c_{i,j} B&\lec \sum_{i+j \geq r } \sum_{p=0}^i  \sum_{n=0}^{p+1}  a b F_{n,0}(v)^{1/2} F_{n+2,0} (v)^{1/2} F_{p+1-n,0} (w) , 
\end{split}
\eqne
where
\begin{equation}\begin{split}
a&\coloneqq \frac{(i+j)^r}{(i+j)!}\overline{\epsilon}^i \epsilon^{j} {i\choose p} (i-p+j)! K_1^{i-p} K_2^j{p+1 \choose n }  \left( \frac{n!}{(n+1)^r \overline{\epsilon }^n} \right)^{\frac12} \left( \frac{(n+2)!}{(n+2)^r \overline{\epsilon }^{n+2}} \right)^{\frac12}  \frac{(p+1-n)!}{(p+2-n)^r \overline{\epsilon }^{p+1-n}} \\
& = \frac{(i+j)^r}{(i+j)!} \frac{i!(p+1)(i-p+j)!}{(i-p)!} \frac{((n+1)(n+2))^{1/2}}{(p+2-n)^r ((n+1)(n+2))^{r/2}} (     \overline{\epsilon} K_1)^{i-p} \overline{\epsilon}^{-2} (\epsilon K_2)^{j}
\end{split}
   \label{EQ78}
     \end{equation}
and 
\begin{equation}
b\coloneqq \frac{\tau^{i+j-r}}{\tau^{\frac12(n-r) \vee 0}\tau^{\frac12(n+2-r) \vee 0} \tau^{(p+1-n-r) \vee 0}} \lec 1.
   \label{EQ79}
     \end{equation}
The last inequality can be verified directly. In fact, it admits much more leeway than \eqref{bl}--\eqref{bh}; for example, the worst possible case, namely when $n\geq r$ and $p\geq n-r$, gives
\begin{equation}
b= \tau^{i+j-r-(n-r) -1-(p+1-n-r)} = \tau^{i-p+j +r-2} \leq 1. 
   \label{EQ80}
     \end{equation}
Note also that, for brevity, we have adjusted the definition of $a$ above by replacing $n^r$ by $(n+1)^r$ and $(p+1-n)^r$ by~$(p+2-n)^r$. This is due to the definition of $F_{i,j}(u)$ in \eqref{def_F} above, which has no prefactor for derivatives of order less than~$r$. For example, if $n=0$ then, without the adjustment, the right-hand side of \eqref{EQ78} would involve division by~$0$. By making the adjustment, we have used the inequality $n+1 \leq 2n$ for $n\geq 1$ (and hence ``$\lec$'' in \eqref{curved_01}).

We now handle~\eqref{curved_01}. Recall that, 
 in order to bound it by a product of analytic norms $\| v \|_{X(\tau )} \| w \|_{X(\tau )}$, the terms with $F$ need to be summed over $p$ and~$n$. We thus need to sum in $i$ and $j$ first. To this end, we note that
\begin{equation}
\sum_{i+j\geq r } \sum_{p=0}^i \leq \sum_{p\geq 0 } \sum_{i\geq p} \sum_{j\geq 0},
   \label{EQ81}
     \end{equation}
since the terms are nonnegative,
and so 
\begin{equation}
\begin{split}
\sum_{i\geq p} \sum_{j\geq 0} a &= (p+1) \frac{((n+1)(n+2))^{1/2}}{(p+2-n)^r ((n+1)(n+2))^{r/2}} \sum_{i\geq p} i^r (\overline{\epsilon} K_1)^i   \sum_{j\geq 0} \underbrace{\frac{(i-p+1) \ldots (i-p+j)}{(i+1)\ldots (i+j)}}_{\leq 1} j^r (\epsilon K_2)^j \lec 1,
\end{split}
   \label{EQ82}
     \end{equation}
provided $\overline{\epsilon}$ and $\epsilon$ are sufficiently small. Applying this in \eqref{curved_01} gives
\begin{equation}\begin{split}
\sum_{i+j \geq r } c_{i,j} B&\lec \sum_{p\geq 0 }  \sum_{n=0}^{p+1}   F_{n,0}(v)^{1/2} F_{n+2,0} (v)^{1/2} F_{p+1-n,0} (w) \lec \| v\|_{\widetilde{X}(\tau )}  \| w\|_{\widetilde{X}(\tau )}  
,
\end{split}
   \label{EQ83}
     \end{equation}
upon changing the order of summation and applying the Cauchy-Schwarz inequality to the $n$-sum. This and \eqref{claim_A} then prove the lemma.

\begin{remark}[Commutators are ok]\label{rem_comms}
{\rm
From the above proof, we note that the commutator terms, which arise whenever a derivative must be brought to the left of the tangential operator, are somewhat troublesome, as they require care with changing the order of summation. However, they are acceptable, as they are of lower order, which make all the coefficients summable with some more leeway. As a result, we do not need to be careful to obtain $\tau \| \cdot \|_{Y}+ \| \cdot \|_{H^r}$, but instead $\| \cdot \|_{\widetilde{X}}$, which is a smaller quantity. Thus, whenever we obtain the $\widetilde{X}$ norm using a straightforward change of the order of summation in a commutator term (or a sum of commutator terms), we  shall skip precise calculations.
}
\end{remark}

Similarly to Corollary~\ref{cor_products}, we can extend the product estimate \eqref{product_curved} to the case of products  where no derivative is lost, which in particular implies that the right-hand involves only the $\widetilde{X}$ norms (rather than one $\widetilde{Y}$ norm and one $\widetilde{X}$ norm).
Namely, we have
\eqnb\label{010_curved}
\sum_{\substack{ i+j\geq r \\ j\geq 2}} \frac{|\alpha |^r}{|\alpha |!} \tau^{|\alpha |-r}
\overline{\epsilon}^i \epsilon^{j} \| \p^i T^{j-2}  \nabla (\na f_1 \na f_2)  \|_{L^2} \lec_r \| f_1 \|_{\widetilde{X}(\tau )} \| f_2 \|_{\widetilde{X}(\tau )} ,
\eqne
\eqnb\label{011_curved}
\sum_{i+j\geq r } \frac{(i+j)^r}{(i+j)!} \tau^{i+j-r} \overline{\epsilon}^i \epsilon^{j} \| T^{i+j -1} (\na f_1 \na f_2)  \|_{L^2} \lec_r \| f_1 \|_{\widetilde{X}(\tau )} \| f_2 \|_{\widetilde{X}(\tau )} ,
\eqne
\eqnb\label{012_curved}
\sum_{i+j\geq r } \frac{(i+j)^r}{(i+j)!} \tau^{i+j-r} \overline{\epsilon}^i \epsilon^{j} \| T^{i+j-1} ( g_1 \cdot \nabla g_2)  \|_{H^1} \lec_r \| g_1 \|_{\tX (\tau )} \| g_2 \|_{\oY (\tau )}.
\eqne
Moreover,
\eqnb\label{013_acurved}
 \|  u  v \|_{X(\tau )} \lec_r \| u \|_{\tX (\tau )}  \| v \|_{{\tX}(\tau )}  ,
\eqne
\eqnb\label{013_curved}
 \|  u \cdot \nabla v \|_{X(\tau )} \lec_r \| u \|_{\tX (\tau )}  \| v \|_{{\oY}(\tau )}   .
\eqne
Note that there is no extra $\tau$ on the right-hand side above (as in \eqref{product_curved}, which involves the $\tY$ norm), as we need to take into account the case where the derivatives in $\p^\alpha (u\cdot \na v)$ all fall on~$\na v$.
\subsection{Pressure estimate}\label{sec_pressure_gen}

Here we prove the pressure estimate \eqref{pres_gen}, which is a consequence of the following statement.

\cole
\begin{lemma}[The pressure estimate in an analytic norm]\label{L_pressure_curved}
Consider the solution $p$ of the Neumann problem
  \begin{align}
  \begin{split}
   &\Delta p = \nabla f_1 : \nabla f_2
   \inon{in $\Omega$}
   \\&
   {\p _n} p = ((g_1\cdot \nabla g_2)\cdot \mathsf{n})|_{\partial\Omega}
   \inon{on $\partial\Omega$}
   ,
  \end{split}
   \label{EQ25_curved}
  \end{align}
with the normalizing condition $\int_\Omega p= 0$,
where $g_1,g_2$ are defined in $\Omega$,
with the compatibility condition
  \begin{equation*}
   \int_{\Omega}
     (\nabla f_1 : \nabla f_2)
   =
   \int_{\partial\Omega}
   ((g_1\cdot \nabla g_2)\cdot \mathsf{n})
   .
  \end{equation*}
 Then we have
  \begin{align}
  \begin{split}
   &
       \Vert \nabla p\Vert_{ X(\tau )}
   \lec
    \Vert f_1\Vert_{\tilde X(\tau)}
    \Vert f_2\Vert_{\tilde X(\tau)}
    +  \Vert g_1 \Vert_{\tX(\tau)}\Vert g_2 \Vert_{\oY(\tau)}
 ,
  \end{split}
   \label{EQ27_curved}
  \end{align}
provided ${\epsilon}$ and $\overline{\epsilon }/\epsilon$ are sufficiently small positive numbers. Moreover, if also $g_1\cdot \mathsf{n}= g_2 \cdot \mathsf{n}=0$, then 
 \begin{align}
  \begin{split}
   &
       \Vert \nabla p\Vert_{ X(\tau )}
   \lec
    \Vert f_1\Vert_{\tilde X(\tau)}
    \Vert f_2\Vert_{\tilde X(\tau)}
    +  \Vert g_1 \Vert_{\tX(\tau)}\Vert g_2 \Vert_{\tX(\tau)}
   .
  \end{split}
   \label{EQ27_acurved}
  \end{align}\colb
\end{lemma}
\colb

Note that the pressure estimate \eqref{pres_gen} follows from Lemma~\ref{L_pressure_curved} since in the case of quadratic term, namely $g_1=g_2=v$, we can use \eqref{EQ27_acurved} to obtain two $\tX$ norms, rather than an $\tX$ norm and a $\oY$ norm.

\begin{proof}[Proof of Lemma~\ref{L_pressure_curved}.]
We denote the left-hand side of \eqref{EQ27_curved} by
 \begin{equation}
 I \coloneqq \sum_{i+j\geq r } c_{i,j} \| \p^i T^j \p_k p \|   
 ,
   \label{EQ84}
     \end{equation}
where $k\in \{ 1,\ldots , d \}$ is fixed. We first assume that $i\geq 2$, and recall from \eqref{i2jany} that
\begin{equation}
\begin{split}
\| \p^i T^j \na p \|
   &\leq \| \p^i T^j \na p \|_{H^2}
   \\&
   \lec \| \p^{i-2} T^j \na (\na f_1 : \na f_2 ) \| + \| \p^{i-2} [\Delta ,T^j ] \na p \| + \| T\p^{i-2} T^j \na p \|_{H^1} + \| \p^{i-2} T^j \na p\|\\
&=: P+Q+R+S 
.
\end{split}
   \label{EQ85}
     \end{equation}
For $P$, we only consider the term when the derivative hits $f_1$, namely $\| \p^{i-2} T^j (D^2 f_1  \na f_2)  \| $, as the other one is analogous. 
We have 
\begin{equation}
\begin{split}
\| \p^{i-2} T^j D^2 f_1  \na f_2  \| &\lec \|  D^2 f_1 \cdot \p^{i-2} T^{j} \na f_2 \| +  \sum_{n=0}^{i-2} \sum_{\substack{l=0 \\ l+n \ne 0}}^j {i-2 \choose n}{j \choose l} \| \p^n T^l D^2 f_1 \cdot \p^{i-2-n} T^{j-l} \na f_2 \|\\
&=: P_1 + P_2
.
\end{split}
   \label{EQ86}
     \end{equation}
For $P_1$, we use the Cauchy-Schwarz inequality and the  interpolation $\| h \|_{4} \lec \| \nabla  h \|^{1/2} \| h \|^{1/2} + \| h \|$ to obtain
\begin{equation}
\begin{split}
\sum_{i+j\geq r } c_{i,j} P_1 &\lec \sum_{i+j\geq r } c_{i,j} \| D^3 f_1 \|^{1/2} \| D^2 f_1 \|^{1/2} \| \p^{i} T^j f_2 \|^{1/2} \| \p^{i-1} T^j  f_2 \|^{1/2} + (\text{Comm.})\\
&\lec  \| f_1 \|_{H^r}  \sum_{i+j\geq r } c_{i,j} \| \p^{i} T^j f_2 \|^{1/2}\| \p^{i-1} T^j  f_2 \|^{1/2}  + (\text{Comm.})\\
&\lec \| f_1 \|_{H^r} \| f_2 \|_{\widetilde{X}} ,
\end{split}
   \label{EQ87}
     \end{equation}
where, in the last line, we have applied the Cauchy-Schwarz inequality and the fact $\tau^{i+j-r} \leq \tau^{(i-1+j-r)\vee 0}$, which allowed us to obtain  $\| f_2 \|_{\widetilde{X}}$, since $(i+j)^r/(i+j)!\lec (i-1+j)^r/(i-1+j)!$, and the implicit constant may depend on $\overline{\epsilon},\epsilon$. Note that we have also neglected the lower order term in the interpolation inequality (which is simpler) and, as mentioned in Remark~\ref{rem_comms}, we have omitted a precise calculation of the estimate of the commutator term.

As for $P_2$, we use the interpolation $\| f_1 \|_\infty \lec \| D^2 f_1 \|^{1/2} \| f_1 \|^{1/2} + \| f_1 \|$ to obtain
\begin{equation}
\begin{split}
\| \p^n T^l D^2 f_1 \cdot \p^{i-2-n} T^{j-l} \na f_2 \| &\lec \| \p^n T^l D^2 f_1 \| \, \|  \p^{i-2-n} T^{j-l} \na f_2 \|_\infty \\
 &\lec \| \p^n T^l D^2 f_1 \| \, \| D^2 \p^{i-2-n} T^{j-l} \na f_2 \|^{1/2} \| \p^{i-2-n} T^{j-l} \na f_2 \|^{1/2}  \\
 &\lec \| \p^{n+2} T^l  f_1 \| \, \|  \p^{i+1-n} T^{j-l}  f_2 \|^{1/2} \| \p^{i-1-n} T^{j-l}  f_2 \|^{1/2}  + (\text{Comm.}) 
\end{split}
   \label{EQ88}
     \end{equation}
and thus 
\eqnb\label{201}
\begin{split}
\sum_{i+j\geq r } c_{i,j} P_2&\lec \sum_{i+j\geq r }   \sum_{n=0}^{i-2} \sum_{\substack{l=0 \\ l+n \ne 0}}^j c_{i,j} {i-2 \choose n}{j \choose l}\| \p^{n+2} T^l  f_1 \| \, \|  \p^{i+1-n} T^{j-l}  f_2 \|^{1/2} \| \p^{i-1-n} T^{j-l}  f_2 \|^{1/2}  + (\text{Comm.}) \\
&\lec \sum_{i+j\geq r }   \sum_{n=0}^{i-2} \sum_{\substack{l=0 \\ l+n \ne 0}}^j c_{i,j} {i-2 \choose n}{j \choose l}\| \p^{n+2} T^l  f_1 \| \, \|  \p^{i+1-n} T^{j-l}  f_2 \|^{1/2} \| \p^{i-1-n} T^{j-l}  f_2 \|^{1/2}  + (\text{Comm.}) \\
&\lec \sum_{i+j\geq r }   \sum_{n=0}^{i-2} \sum_{\substack{l=0 \\ l+n \ne 0}}^j  F_{n+2,l} (f_1) F_{i+1-n,j-l}(f_2)^{1/2} F_{i-1-n,j-l} (f_2)^{1/2}.
\end{split}
\eqne
In the last step, we used 
\begin{equation}
\tau^{i+j -r } \leq \tau^{(n+2+l-r)\vee 0} \left( \tau^{(i-n+1+j-l)\vee 0} \tau^{(i-1-n+j-l)\vee 0} \right)^{\frac12},
   \label{EQ89}
     \end{equation}
which can be verified directly (here we also used $(n,l)\ne 0$). Note that, in order to obtain the terms with $F$ above, we also estimated the factorials as 
\begin{equation}
\begin{split}
c_{i,j} &{i-2 \choose n}{j \choose l} \cdot \frac{(n+2+l)!}{(n+2+l)^r}\left( \frac{(i-1-n+j-l)! (i-1-n+j-l)!}{(i-1-n+j-l)^r (i-1-n+j-l)^r} \right)^{\frac12} \\
&= \underbrace{\frac{{n+2+l \choose n+2}{i-2-n+j-l \choose i-2-n }}{{i+j \choose i}}}_{\leq 1} \cdot \underbrace{ \frac{(n+1)(n+2)}{i(i-1)}}_{\leq 1}\cdot \frac{1}{(i-1-n+j-l)(i-n+j-l)^{1/2} (i+1 -n+j-l)^{1/2}} \\
&\hspace{7cm}\cdot\underbrace{\frac{(i+j)^r}{(n+2+l)^r (i+1-n+j-l)^{\frac{r}2}(i-1-n+j-l)^{\frac{r}{2}}} }_{\lec 1 }\\
&\lec 1
,
\end{split}
   \label{EQ90}
     \end{equation}
due to Vandermonde's identity, and by $(a+b)^r\leq (2ab)^r$ for $a,b\geq 1$. Note that we have also omitted a detailed analysis of the commutator terms in \eqref{201}, which would require further rearranging. Applying the Cauchy-Schwarz inequality to \eqref{201}, we obtain
\begin{equation}
\sum_{i+j\geq r} c_{i,j} P_2 \lec \| f_1 \|_{\widetilde{X}} \| f_2 \|_{\widetilde{X}} ,
   \label{EQ91}
     \end{equation}
as required.

As for $Q$, we use the commutator decomposition \eqref{202} to obtain
\begin{equation}
\begin{split}
\sum_{i+j\geq r} c_{i,j} Q&\lec  \sum_{i+j\geq r}  \sum_{l=0}^{j-1} \sum_{|\alpha |=j-l} \sum_{m=0}^{i-2} \frac{(i+j)^r }{(i+j)!} \tau^{i+j-r} \overline{\epsilon}^i \epsilon^j  \frac{j!}{\alpha! l!} \| \p^{i-2-m} (b_{\alpha_1} b_{\alpha_2} ) \|_\infty \frac{(i-2)!}{m! (i-2-m)!} \| \p^{m+2} T^l \nabla p\|, 
\end{split}
   \label{EQ92}
     \end{equation}
where from 
\eqnb\label{203}
b_{\alpha_1} \cdot \nabla (b_{\alpha_2} \cdot \nabla )= \sum_{i,j}b_{\alpha_1,i} b_{\alpha_2,j} \p_i \p_j  + ((b_{\alpha_1} \cdot \nabla ) b_{\alpha_2} )\cdot \nabla
\eqne
we have considered only the first term.
We note that 
\begin{equation}
\begin{split}
\frac{\| \p^{i-2-m} (b_{\alpha_1} b_{\alpha_2} ) \|_\infty }{\alpha! (i-2-m)!} &\leq \frac{1}{\alpha! (i-2-m)!}\sum_{n=0}^{i-2-m}  {i-2-m \choose n} \| \p^n b_{\alpha_1} \|_\infty \| \p^{i-2-m-n} b_{\alpha_2} \|_\infty \\
&\leq \frac{1}{\alpha! } \sum_{n=0}^{i-2-m}  \frac{(n+\alpha_1)! (i-1-m-n+\alpha_2)!}{n! (i-1-m-n)!} K_1^{i-1-m} K_2^{|\alpha |}   \\
&\leq 2^{i-1-m+|\alpha |} (i-m)  K_1^{i-1-m} K_2^{|\alpha |}   \\
&\leq (4K_1)^{i-1-m} (4K_2)^{|\alpha |},
\end{split}
   \label{EQ93}
     \end{equation}
which gives 
\eqnb\label{labelQ}
\begin{split}
\sum_{i+j\geq r} c_{i,j} Q&\lec  \sum_{i+j\geq r}  \sum_{l=0}^{j-1} \underbrace{\sum_{|\alpha |=j-l}}_{\lec 2^{j-l}} \sum_{m=0}^{i-2} \frac{(i+j)^r }{(i+j)!} \tau^{i+j-r} \overline{\epsilon}^i \epsilon^j  \frac{j!}{ l!}  \frac{(i-2)!}{m! } (4K_1)^{i-1-m} (4K_2)^{j-l}\| \p^{m+2} T^l \nabla p\|\\
&\lec \sum_{m,l\geq 0} F_{m+2,l}(\na p)  \sum_{\substack{i\geq m+2 \\ j\geq l+1 \\ i+j\geq r}} \underbrace{\frac{(m+2+l)! (i+j)^r j! (i-2)!}{(m+2+l)^r (i+j)! l! m! }}_{=:q} \overline{\epsilon}^{i-2-m} \epsilon^{j-l} (4K_1)^{i-2-m} (8K_2)^{j-l} \\
&\lec \epsilon \| \na p \|_{\widetilde{X}}
;
\end{split}
\eqne
in the last line we have used
\begin{equation}
q = \frac{ {m+2+l \choose m+2}}{{i+j \choose i}} \frac{(i+j)^r}{(m+2+l)^r } \underbrace{\frac{(m+1)(m+2)}{i(i-1)}}_{\leq 1} \lec (i+j-(m+2+l))^r \lec_r (4 K_1)^{i-2-m} (8K_2)^{j-l},
   \label{EQ94}
     \end{equation}
by the binomial inequality 
\eqnb\label{binom_ineq}
\begin{split}
{m+2+l \choose m+2} &= \frac{(m+2+l)!}{(m+2)! l!}  =  \frac{(m+2+j)!}{(m+2)! j!} \underbrace{\frac{l+1}{m+2+l+1} \ldots \frac{j}{m+2+j}}_{\leq 1}  \\
&\leq \frac{(i+j)!}{i! j!}  \underbrace{\frac{m+3}{m+3+j}\ldots  \frac{i}{i+j}}_{\leq 1} \leq {i+j \choose i}
,
\end{split}
\eqne
for $i\geq m+2$, $j\geq l$, as well as the elementary inequality\footnote{This is obvious for $a\in [b+1,2b)$ and otherwise, if $a\in [kb,(k+1)b)$ for some $k\geq 2$, we have $a/b\leq k+1 \leq 2(k-1) \leq 2(k-1)b  \leq 2(a-b) $, as required.}$\frac{a}{b} \leq 2(a-b)$ for $a\geq b+1 \geq 2$. The second part of $Q$, arising from the second term in \eqref{203}, is estimated analogously.\\

As for $R$, we note that the case $i=2$ is trivial, since then
\begin{equation}
R\lec \| \p T^{j+1} \na p\| + \| T^{j+1} \na p\|
,
   \label{EQ95}
     \end{equation}
and so
\eqnb\label{labelR0}
\sum_{2+j\geq r} c_{2,j} R \leq \frac{\overline{\epsilon}}{\epsilon } \| \na p \|_{\widetilde{X}} .
\eqne
For $i\geq 3$, we write
\begin{equation}
 \nabla T \p^{i-2} T^j \na p = \p^{i-1} T^{j+1} \na p - \sum_{n=0}^{i-3} {i-2 \choose n} \na \left(  \p^{i-2-n} b  \,\p^{n+1} T^j \na p \right)
,
   \label{EQ96}
     \end{equation}
and thus
\begin{equation}\begin{split}
R&\lec \| T \p^{i-2} T^j \na p \| + \| \p^{i-1} T^{j+1} \na p \| + \sum_{n=0}^{i-3} {i-2 \choose n} \left( \| \p^{i-2-n} b\|_\infty \| \p^{n+2} T^j \na p\|+ \| \p^{i-1-n} b \|_\infty \| \p^{n+1}T^j \nabla p \| \right)\\
&=: R_1 + R_2 + R_3 + R_4
.
\end{split}
   \label{EQ97}
     \end{equation}
Clearly,
\eqnb\label{labelR1}
\sum_{\substack{i+j\geq r\\ i\geq 3}} c_{i,j} (R_1+R_2) \leq \left( \overline{\epsilon} + \frac{\overline{\epsilon}}{\epsilon} \right)\| \na p\|_{\widetilde{X}}.
\eqne
Moreover,
\eqnb\label{labelR2}
\begin{split}
\sum_{\substack{i+j\geq r\\ i\geq 3}} c_{i,j} R_3 &\lec   \sum_{\substack{i+j\geq r\\ i\geq 3}}\sum_{n=0}^{i-3}  \frac{(i+j)^r (n+j)!}{(n+j)^r (i+j)! } \frac{(i-2)!}{n! } (\overline{\epsilon} K_1)^{i-2-n}  F_{n+2,j} (\na p)\\
&\lec \sum_{n,j\geq 0 }\frac{j^r }{(n+j)^r} F_{n+2,j} (\na p) \sum_{i\geq n+3} \frac{{n+j \choose n}}{{i+j \choose i} (i-1)i } i^r (\overline{\epsilon} K_1)^{i-2-n}  \\
&\lec \overline{\epsilon} K_1 \sum_{n,j\geq 0 } F_{n+2,j} (\na p)
\\&
\lec \overline{\epsilon} \| \na p\|_{\widetilde{X}},
\end{split}
\eqne
where we used \eqref{binom_ineq} in the last inequality. A similar calculation shows that $\sum_{{i+j\geq r, i\geq 3}} c_{i,j} R_4 \lec \overline{\epsilon} \| \na p\|_{\widetilde{X}} $. \\

As for $S$, we have that
\eqnb\label{labelS}
\sum_{i+j\geq r} c_{i,j} \| \p^{i-2}T^j \na p\| \leq \overline{\epsilon}^2 \sum_{i+j\geq r} F_{i-2,j}(\na p) \leq \overline{\epsilon}^2 \| \na p \|_{\widetilde{X}},
\eqne
since $\frac{(i+j)^r}{(i+j)!} \leq \frac{(i-2+j)^r}{(i-2+j)!} $.\\

It remains to consider the case $i\in \{ 0,1\}$. To this end, we note that
\eqnb\label{i1_case}
\sum_{j\geq r-i} c_{i,j} \| \p^i T^j \na p \| \lec\epsilon \| \nabla p \|_{\widetilde{X}} + \sum_{j\geq r-i}c_{i,j}  \| T^{j+i-1} p \|_{H^2}.
\eqne
Indeed, for $i=1$, we use \eqref{bounds_b} to get
\eqnb\label{204}
\| \p [ T^j , \nabla ] p \| = \| \p (b_j \cdot \nabla )p \| \lec j! K_2^j  \| D^2 p \| + (j+1)! K_2^j \| \nabla p \|
\eqne
and so
\begin{equation}
\sum_{j\geq r-1} c_{1,j} \| \p [ T^j , \nabla ]  p \| \lec \overline{\epsilon }\epsilon^2 \| \nabla p \|_{H^1}  \lec \epsilon \| \nabla p \|_{\widetilde{X}}
.
   \label{EQ98}
     \end{equation}
Similarly, in the case $i=0$, we have $\sum_{j\geq r} c_{0,j} \| [ T^j , \nabla ]  p \| \lec \epsilon \| \nabla p \|_{\widetilde{X}}$, from which \eqref{i1_case} follows. 

In order to control $\| T^{j+i-1} p \|_{H^2}$, we recall that $p$ satisfies $\Delta p = \nabla f_1 : \nabla f_2$ with the Neumann boundary condition $\p_n p  =(g_1\cdot \nabla g_2)\cdot \mathsf{n}$. We focus on the case $i=1$ first. Applying $T^j$, we see that $T^j p$ satisfies
\begin{equation}
\begin{cases}
\Delta T^j p =T^j (\na f_1  \na f_2 ) + [\Delta , T^j ] p\quad &\text{ in }\Omega ,\\
\p_{\mathsf{n}} T^j p = T^j \left(  (g_1\cdot \na g_2)\cdot \mathsf{n} \right) -\sum_{k=1}^j {j\choose k} T^{j-k} \na p \cdot T^k \mathsf{n} + [\na ,T^j ] p \cdot \mathsf{n} &\text{ on } \p \Omega 
.
\end{cases}
   \label{EQ99}
     \end{equation}
Applying the $H^2$ elliptic estimate thus gives
\eqnb\label{205}
\begin{split}
\| T^j p \|_{H^2} &\lec \| T^j (\na f_1  \na f_2)\| + \| [\Delta , T^j ] p \| + \| T^j ((g_1 \cdot \na) g_2 \cdot \mathsf{n}  ) \|_{H^{1/2}(\p \Omega)}
\\&\indeq
+ \sum_{k=1}^j {j \choose k} \| T^{j-k} \na p \cdot T^k \mathsf{n} \|_{H^1} + \| [\nabla , T^j ]p \cdot \mathsf{n} \|_{H^1}\\
&=: M_1 + M_2 + M_3 + M_4 + M_5.
\end{split}
\eqne
In order to bound $M_1 $--$M_5$, we note that the only terms in \eqref{205} where we do not gain a derivative (from a commutator or $\mathsf{n}$) are those containing $v,w,g$, and so we expect all the remaining terms to be bounded by~$\epsilon \| \na p \|_{\widetilde{X}}$.

We have $\sum_{j\geq r-1} c_{1,j} M_1 \leq \| f_1 \|_{\widetilde{X}} \| f_2 \|_{\widetilde{X}}$, by the product estimate \eqref{011_curved}, and, similarly to term $Q$ above we have, using \eqref{202},
\begin{equation}
\begin{split}
\sum_{j\geq r-1} c_{1,j} M_2 &\lec \sum_{j\geq r-1} \frac{(1+j)^r}{(1+j)!} \tau^{j+1-r} \overline{\epsilon} \epsilon^{j} \sum_{l=0}^{j-1} \sum_{|\alpha |=j-l}\frac{j!}{\alpha! l!} \underbrace{ \| b_{\alpha_1} b_{\alpha_2} \|_\infty}_{\lec \alpha! K^{j-l}} \| \p^2 T^l p \|\\
&\lec  \sum_{j\geq r-1}\sum_{l=0}^{j-1} \frac{(1+j)^r}{(1+j)!} \tau^{j+1-r} \overline{\epsilon} \epsilon^{j}  \sum_{|\alpha |=j-l}\frac{j!}{\alpha! l!} \| b_{\alpha_1} b_{\alpha_2} \|_\infty \| \p^2 T^l p \|\\
&\lec  \sum_{l\geq r-2} F_{1,l} (\na p) \underbrace{ \sum_{j\geq l+1} (\epsilon K) ^{j-l}(j-l)^{r+1}  }_{\lec \epsilon } + \sum_{j\geq r-1}\sum_{l=0}^{j-1} \frac{(1+j)^r}{(1+j)!} \tau^{j+1-r} \overline{\epsilon} \epsilon^{j}  \sum_{|\alpha |=j-l}\frac{j!}{l!} K^{j-l} \| \p [ T^l,\na ] p \|\\
&\lec \epsilon \| \na p\|_{\widetilde{X}} +\overline{\epsilon} \sum_{j\geq r-1}\sum_{l=0}^{j-1} j^{r+1} (\epsilon K)^{j} \| \na p \|_{H^1}\\
&\lec \epsilon \| \na p\|_{\widetilde{X}},
\end{split}
   \label{EQ100}
     \end{equation}
where $K\coloneqq \max \{ K_1, K_2\}$, and we noted, as in \eqref{204}, that
\begin{equation}
\| \p [T^l, \na ]p \| \leq \| \na b_l \|_\infty \| \na p\| + \| b_l \|_{\infty } \| D^2 p \|\lec (1+l)! K^l \| \na p \|_{H^1}.
   \label{EQ101}
     \end{equation}
For $M_3$, we use the trace estimate on $\Omega'$, the product estimates  \eqref{013_acurved}--\eqref{013_curved}, and the analyticity of $\mathsf{n}$ (recall~\eqref{n_is_ana})
 to obtain
  \begin{align}
  \begin{split}
\sum_{j\geq r-1} c_{1,j} M_3 &\lec  \sum_{j\geq r-1} c_{1,j} \left(\|\p T^j ((g_1 \cdot \na )g_2\cdot \mathsf{n} ) \|_{L^2 (\Omega')} + \| T^j ((g_1 \cdot \na )g_2\cdot \mathsf{n} ) \|_{L^2 (\Omega')}  \right)
\\&
\lec \| (g_1 \cdot \nabla )g_2 \cdot \mathsf{n} \|_{\tX (\tau;\Omega')} \lec \| g_1 \|_{\tX } \| \na g_2 \|_{\tX} \lec  \| g_1 \|_{\tX } \| g_2 \|_{\oY}  ,
  \end{split}
   \label{M3_calc}
  \end{align}
  \colb 
while for $M_4$ we have
\begin{equation}\begin{split}
 \sum_{j\geq r-1} c_{1,j}& \sum_{k=1}^{j} {j\choose k} \| \na ( T^{j-k} \na p \cdot T^k \mathsf{n} ) \|\\ &\lec \sum_{k\geq 1} \sum_{\substack{j\geq k \\j\geq r-1} } \frac{(1+j)^{r-1}}{k! (j-k)!} \overline{\epsilon} \epsilon^j \tau^{(1+j-k-r)\vee 0} \left( \| \p T^{j-k } \na p\| \| T^k \mathsf{n} \|_{\infty} +\|  T^{j-k } \na p\| \| \p T^k \mathsf{n} \|_{\infty} \right)  \\
 &\lec \epsilon \| \na p \|_{\widetilde{X}} \sum_{k\geq 1}  \epsilon^k\left(  \frac{k^r}{k!} \| T^k \mathsf{n} \|_\infty + \overline{\epsilon }\frac{(1+k)^r}{(1+k)!} \|\p  T^k \mathsf{n} \|_\infty \right)\\
 &\lec_\mathsf{n} \epsilon \| \na p \|_{\widetilde{X}}, 
 \end{split} 
   \label{EQ102}
     \end{equation}
where we used the fact \eqref{n_is_ana} that $\mathsf{n}$ is an analytic function with a  lower bound on the analyticity radius. A similar and easier argument gives the same bound when there is no $\na$ on the left-hand side, which yields $\sum_{j\geq r-1} c_{1,j} M_4 \lec \epsilon \| \na p \|_{\widetilde{X}}$.

As for $M_5$, we get 
 \begin{equation}
 \| \na ([\na , T^j ]p \cdot \mathsf{n})\| \lec  \| \na (b_j \cdot \na ) p \| + \| (b_j \cdot \na ) p \| \| \mathsf{n} \|_{H^r} \lec (j+1)! K^j \| \na p \|_{H^1}, 
   \label{EQ103}
     \end{equation}
 and similarly for $\| [\na , T^j ]p \cdot \mathsf{n} \|$, which leads to
 \eqnb\label{labelM5}
 \sum_{j\geq r-1} c_{1,j} M_5 \lec \| \na p \|_{H^1} \sum_{j\geq r-1} (j+1)^r \tau^{j+1-r} \overline{\epsilon } \epsilon^j K^j \lec \epsilon \| \na p \|_{\widetilde{X}}.
 \eqne

We now focus on $\| T^{j+i-1} p \|_{H^2} $ in the case $i=0$. Recall from~\eqref{i1_case} that we then need to control $\| T^{j-1} p \|_{H^2}$, and the arguments remain analogous. Indeed, note that in the above arguments, concerned with the case $i=1$, we have not utilized the fact that $\overline{\epsilon }/\epsilon \ll 1$. We only needed this earlier for the derivative reduction procedures in the case $i\geq 2$ above (in \eqref{labelR1}, for example), where one does not gain a derivative. There we only used the fact that $\epsilon $ is sufficiently small for the absorption of the resulting terms $\epsilon \| \na p \|_{\widetilde{X}}$. In the control of $\| T^{j-1} p \|_{H^2}$, similarly to the case $i=1$ above, only the combined powers of $\overline{\epsilon}$ and $\epsilon$ are relevant, showing that the computation in the case $i=0$ is analogous.\\ \colb

Finally, in order to justify the last claim in the lemma, we take advantage of the derivative reduction (see Lemma~\ref{L001} below) to write $(g_1\cdot \nabla) g_2 \cdot \mathsf{n} = - g_{1,l} g_{2,m} \p_{lm} \phi$, where $\phi$ is the signed distance function. We replace \eqref{M3_calc} by 
\begin{equation}\begin{split}
\sum_{j\geq r-1} c_{1,j} M_3 &\lec  \sum_{j\geq r-1} c_{1,j} \left(\|\p T^j (g_1g_2 D^2\phi) \|_{L^2 (\Omega')} + \| T^j (g_1g_2 D^2\phi) \|_{L^2(\Omega')}  \right) \\
&\lec \|g_1g_2 D^2\phi \|_{\tX (\tau;\Omega')}\lec \| g_1  g_2 \|_{\tX}\lec \| g_1 \|_{\tX } \| g_2 \|_{\tX}   
\end{split}
   \label{EQ104}
     \end{equation}
in the case $i=1$, where we also used \eqref{D2phi_is_ana}, and we make a similar adjustment in the case $i=0$.
\end{proof}

Analogously to Corollary~\ref{cor_extra_p_flat}, we now extend the above pressure estimate to the case of a simpler boundary condition.

\cole
\begin{corollary}\label{cor_extra_p_gen}
Suppose that $p$ satisfies
\eqnb
\begin{split}
\Delta p &=0 \quad \text{ in }\Omega,\\
\p_{\mathsf{n}} p &=f \cdot \mathsf{n} \quad \text{ on }\p \Omega.
\end{split}
\eqne
Then $\| \na p \|_{\tX} \lec \| f \|_{\tX}$.
\end{corollary}
\colb

\begin{proof}[Proof of Corollary~\ref{cor_extra_p_gen}]
We note that, similarly to \eqref{labelQ}--\eqref{i1_case}, we have
\eqnb\label{tempcor}
\| \na p\|_{\tX } \lec \left( {\epsilon}+ \frac{\overline{\epsilon}}{\epsilon } \right) \| \na p \|_{\tX} +\sum_{j\geq r}c_{0,j}  \| T^{j-1} p \|_{H^2}+ \sum_{j\geq r-1}c_{1,j}  \| T^{j} p \|_{H^2} .
\eqne
As for the last term we obtain, similarly to \eqref{205}--\eqref{labelM5} (the only difference is in \eqref{M3_calc}, where ``$g_1\cdot \nabla g_2$'' is replaced by ``$f $'') we obtain
\begin{equation}
\sum_{j\geq r-1}c_{1,j}  \| T^{j} p \|_{H^2} \lec \epsilon \| \na p \|_{\tX } + \| f \cdot \mathsf{n} \|_{\tX (\tau ; \Omega' )}\lec \epsilon \| \na p \|_{\tX } + \| f \|_{\tX },
   \label{EQ105}
     \end{equation}
where we used \eqref{013_acurved} and the analyticity assumption~\eqref{n_is_ana} on~$\Omega$. A similar argument for the penultimate term in \eqref{tempcor} gives the claim.
\end{proof}\colb

\subsection{Derivative reduction lemma}
Here we prove a reduction lemma, which is due to Temam~\cite{T}.

\cole
\begin{lemma}[Derivative reduction]
\label{L001}
Let $f$ and $g$ be sufficiently smooth vector fields
in $\Omega$ such that
  \begin{equation}
   f\cdot\mathsf{n}=g\cdot\mathsf{n}=0
   \inon{on $\partial\Omega$}
   .
   \label{EQ02a}
  \end{equation}
Then we have
  \begin{equation}
   (f\cdot \nabla g)\cdot \mathsf{n}
   = - f_i g_j \partial_{ij}\phi
   \inon{on $\partial\Omega$}
   ,
   \llabel{EQ01}
  \end{equation}
where $\phi$ is the signed distance function
taking positive values outside and negative values inside
(so that $\mathsf{n}=\nabla \phi$).
\end{lemma}
\colb%%%out

Note that $|\nabla \phi|=1$ in a neighborhood of~$\partial\Omega$.
For simplicity, we denote
  \begin{equation}
   \phiij = \partial_{ij} \phi
   \comma i,j=1,2
.
   \label{EQ65}
  \end{equation}

\begin{proof}[Proof of Lemma~\ref{L001}]
The boundary $\partial\Omega$ is a solution of the equation
$\phi(x)=0$, and since also
  \begin{equation}
   g \cdot \nabla \phi = 0
   \inon{on $\partial\Omega$}   
   ,
   \llabel{EQ03}
  \end{equation}
by \eqref{EQ02a} for $g$, noting also that $\nu=\nabla \phi$, we have
  \begin{equation}
   \nabla (g\cdot \nabla \phi) = k \nabla \phi
   \inon{on $\partial\Omega$}
   ,
   \llabel{EQ04}
  \end{equation}
i.e.,
  \begin{equation}
   \partial_i(g_j\partial_{j}\phi)= k \partial_{i} \phi   
   \inon{on $\partial\Omega$}
   ,
   \llabel{EQ05}
  \end{equation}
where $k$ is a scalar valued function in a neighborhood
of~$\partial\Omega$.
Therefore, we obtain
  \begin{align}
  \begin{split}
   (f\cdot \nabla g)\cdot \mathsf{n}
   &=    (f\cdot \nabla g)\cdot \nabla \phi
   = f_i \partial_{i} g_j \partial_{j} \phi
   = f_i \partial_{i} (g_j \partial_{j} \phi)
     - f_i g_j \partial_{ij} \phi
   = f_i k \partial_{i}\phi
     - f_i g_j \partial_{ij} \phi      
%   \\&
   = - f_i g_j \partial_{ij} \phi      
  \end{split}
  \llabel{EQ06}
  \end{align}
on $\partial\Omega$,
where we used \eqref{EQ02a} for $f$ in the last step.
\end{proof}\colb

\section{Global well-posedness in $2$D}\label{sec_global}
Here we prove Theorem~\ref{T02}. For simplicity, we take $\tau_0=1$. 
Suppose that
\begin{equation}
\| \omega_0 \|_\infty + \| v_0 \|_{\widetilde{X}(1)} \leq C_0,
   \label{EQ106}
     \end{equation}
where $\omega_0 \coloneqq \mathrm{curl}\,v_0$, and 
\begin{equation}
\| \ou (t) \|_{\widetilde{X}(1)} + \| \ou (t) \|_{\overline{Y}(1)} \leq f(t) \qquad \text{ for all }t\geq 0, 
   \label{EQ107}
     \end{equation}
where $f\colon [0,\infty ) \to [0,\infty )$ is such that
\eqnb\label{f_assum_1}
\| f\|_{L^2} + \| f \|_\infty \leq C_1
\eqne
for some $C_0,C_1\geq1$.
We note that the a~priori estimate \eqref{002} gives 
\begin{equation}
\frac{\d }{\d t } \| v \|_{\widetilde{X}(\tau )} - \dot \tau \| v \|_{Y(\tau )} \leq \widetilde{C} \bigl( \| v \|_{\widetilde{X}(\tau )} \| v \|_{\widetilde{Y}(\tau )} +f \bigl(  \| v \|_{\widetilde{X}(\tau )} +  \| v \|_{\overline{Y}(\tau )} \bigr) + f^2 \bigr)
   \label{EQ03}
     \end{equation}
for some $\widetilde{C}\geq1$.
Thus, if
\eqnb\label{tau_eq}
-\dot \tau \geq 2\widetilde{C} \left( \| v \|_{\widetilde{X}}\tau +f \right) 
\eqne
(recall \eqref{temp23}), then
\eqnb\label{023}
\frac{\d }{\d t } \| v \|_{\widetilde{X}(\tau )} - \frac{\dot \tau}2 \| v \|_{Y(\tau )} \leq \widetilde{C} \bigl( \| v \|_{\widetilde{X}(\tau )} \| v \|_{H^3} +f \bigl(  \| v \|_{\widetilde{X}(\tau )} +  \| v \|_{H^3} \bigr) + f^2 \bigr)
.
\eqne
This suggests that the $\widetilde{X}$ norm (as well as the time integral of the $Y$ norm) is under control as long as the $H^3$ norm is.
As for the $H^3$ norm, we first note that 
\begin{equation}
\frac{\d }{\d t} \| \omega + \oo \|_{\infty } \leq \| \ou \|_\infty \|\nabla  \omega + \nabla\oo \|_{\infty }
   \label{EQ108}
     \end{equation}
(since $\omega + \oo$ satisfies  the transport equation with the velocity $v+\ou$, and $v\cdot \mathsf{n}=0$ on the boundary), where $\omega \coloneqq \mathrm{curl}\, v$ and $\oo \coloneqq \mathrm{curl}\,\ou$.
Therefore,
\eqnb\label{aaa}
\| \omega \|_\infty \leq \| \omega_0 + \oo (0) \|_\infty +  \| \oo \|_\infty + \int_0^t \| \ou \|_\infty \| \nabla \omega +\nabla \oo \|_{\infty }  \leq C
\eqne
for all $t>0$ such that 
\eqnb\label{onemore}
\int_0^t f (s) \| v (s) \|_{W^{2,\infty}} \leq 1,
\eqne
where $C\geq 1$ is a constant depending on both $C_0$ and~$C_1$.

Here we have assumed for a moment that $v$ exists and is regular for all
times. The inequality \eqref{aaa} and the well-known elliptic bound $\| \nabla v \|_\infty \lec 1+ \| \mathrm{curl}\, v \|_\infty (1+\log^+ \| v \|_{H^3})$ (see (6) in \cite{F}) allows us to derive the $H^3$~estimate of \eqref{intro_euler_v},
\begin{equation}
\frac{\d }{\d t} \| v \|_{H^3} \lec \| v \|_{H^3} \left( \| \nabla v \|_\infty + f \right) +f\| v \|_{{Y}} + f^2 \lec \| v \|_{H^3} \left( 1+ \log^+ \| v \|_{H^3} \right) +f\| v \|_Y + 1,
   \label{EQ109}
     \end{equation}
where the implicit constant also depends on $C_0, C_1$, and the ``$f\| v \|_Y$'' term comes from the ``$\ou \cdot \nabla v$'' term in \eqref{intro_euler_v}, as  $\|v\|_{H^4}\lec \| v\|_Y+\|v\|_{H^3}$. We note that the appearance of the $H^4$ norm above is a consequence of the derivative loss described in \eqref{der_loss} above. We thus obtain $C_2\geq1$ (depending on $C_0,C_1$) such that 
\begin{equation}
\| u (t)\|_{H^3} \leq C_2 \exp \exp (C_2 t)
   \label{EQ110}
     \end{equation}
for all $t\geq 0$ such that
\eqnb\label{f_assum_3cons}
\int_0^t f(s) \| v (s)\|_Y \d s \leq 1.
\eqne
Consequently, \eqref{023} gives 
  \begin{align}
  \begin{split}
   &
\sup_{s\in [0,t]}\| v(s) \|_{\widetilde{X}(\tau )} - \int_0^t \dot \tau  \| v  (s)\|_{Y(\tau )} \d s
\\&\indeq
\lec \exp \left( \exp \exp (C_2 t) +\int_0^t f\right) \left( 1+ \| v_0 \|_{\widetilde{X}} +\| v_0 \|_{H^3} \int_0^t f \exp \exp  (C_2 s) \d s \right) 
  ,
  \end{split}
   \label{023a} 
  \end{align}
%\eqnb\label{023a}
%\sup_{s\in [0,t]}\| v(s) \|_{\widetilde{X}(\tau )} - \int_0^t \dot \tau  \| v  (s)\|_{Y(\tau )} \d s
%\\&\indeq
%\lec \exp \left( \exp \exp (C_2 t) +\int_0^t f\right) \left( \| v_0 \|_{\widetilde{X}} +\| v_0 \|_{H^3} \int_0^t f \exp \exp  (C_2 s) \d s\right) 
%\eqne
and so there exists $C_3\geq 1$ (depending on $C_0,C_1$) such that
\eqnb\label{023b}
\sup_{s\in [0,t]}\| v(s) \|_{\widetilde{X}(\tau )}  -\int_0^t \dot \tau  \| v  (s)\|_{Y(\tau )} \d s  \leq C_3 \exp \exp \exp (C_3 t) 
\eqne
for all $t\geq 0$ such that \eqref{tau_eq} and
\eqnb\label{f_assum_2}
\int_0^t f(s) \exp \exp (C_2 s ) \d s \leq 1
\eqne
hold.
We emphasize that $\tau $ (and so also $\| \cdot \|_{\widetilde{X}}$ and $ \| \cdot \|_{Y}$) is a function of time. Now that we have estimated the expected growth of $\| v \|_{\widetilde{X}}$, we can define~$\tau$. Namely, in light of \eqref{tau_eq}, we see that there exists $C_4\geq1$ such that 
\eqnb\label{tau_def}
\tau (t) \coloneqq  \exp \left( - C_4 \int_0^t \exp \exp \exp (C_4 s )\d s \right)  
\eqne
satisfies $\tau_0=1$, as assumed, and \eqref{tau_eq}, provided \eqref{023b} and that $f$ satisfies
\eqnb\label{f_assum_2a}
C_5 f(t) \exp^4 (C_5 t) \leq 1
    \comma t\geq 0,
\eqne
where $C_5 \geq C_4$ is a large constant. 
This choice of $\tau$ first lets us determine another constraint for $f$ guaranteeing~\eqref{onemore}.
Namely, noting that there exists $C_6>C_5$ such that 
\begin{equation}
\| v(s) \|_{W^{2,\infty}} \lec \tau(s)^{-10}\| v (s) \|_{\tX (\tau (s))} \lec C_6 \exp^4 (C_6 s)
   \label{EQ111}
     \end{equation}
for all $t\geq 0$ for which \eqref{023b} holds, we see that \eqref{onemore} follows if 
\eqnb\label{onemore1}
C_6 \int_0^t f(s) \exp^4 (C_6 s)  \d s \leq 1.
\eqne
Second, the choice \eqref{tau_def} determines the integrability of $\| v\|_{Y}$ in time, namely we have 
\begin{equation}
-\dot \tau (t) \geq  C_7^{-1}  \exp \left(- \exp \exp \exp (C_7 t) \right) 
   \label{EQ112}
     \end{equation}
for some $C_7\geq C_6$, and so \eqref{023} implies that
\eqnb\label{023c}
\int_0^t \frac{\| v \|_{Y(\tau )}}{C_7  \exp^4 (C_7 s)} \d s \lec \int_0^t C_3  \exp^3 (C_3 s )\,\d s
,
\eqne
from where $\int_0^t \frac{\| v \|_{Y(\tau )}}{C_8 \exp^4 (C_8 s)} \d s \leq 1$  if  $C_8>C_7$ is sufficiently large.

This shows that, choosing $\tau$ as in \eqref{tau_def}, we obtain the global-in-time a~priori bound \eqref{023b}, provided $\ou$ is such that $f$ satisfies 
\begin{equation}
f(t) \, C_8 \exp^4 (C_8 t) \leq 1 \qquad \text{ for all }t>0 ,
   \label{EQ113}
     \end{equation}
so that $f$ satisfies \eqref{f_assum_1}, \eqref{f_assum_3cons}, \eqref{f_assum_2}, \eqref{f_assum_2a}, and~\eqref{onemore1}. This way, all the above estimates hold unconditionally for all $t\geq 0$ (by applying a simple continuity argument), and the a~priori bound \eqref{023b}, which is valid for all $t\geq 0$, lets  us modify the local well-posedness proof of Section~\ref{sec_flat_conclude} to obtain the global well-posedness result with 
\begin{equation}
\| v \|_{L^\infty ((0,\infty ); \widetilde{X}(\tau ))} + \| -\dot \tau v \|_{L^1 ((0,\infty ); {Y} (\tau ))} <\infty,
   \label{EQ114}
   \end{equation}
with $\tau $ chosen in \eqref{tau_def}, as required.
\section*{Data availability statement}
The paper has no associated data.

\section*{Acknowledgments}
The authors are grateful to James Kelliher for interesting discussions and constructive comments. This work is based upon work supported by the National Science Foundation under Grant No.~DMS-1928930 while the authors were in residence at the Simons Laufer Mathematical Sciences Institute (formerly MSRI) in Berkeley, California, during the summer of~2023.
IK was supported in part by the NSF grant DMS-2205493,
%WSO was supported in part by the Simons Foundation,
while
MS was supported in part by the grant PRIN-PNRR~P202254HT8.
%MS was supported in part by the grants PRIN-2017YBKNCE and PRIN-PNRR~P202254HT8.

\colb
\small

\colb

\ifnum\sketches=1
\newpage
\begin{center}
  \bf   Notes?\rm
 \begin{align}
  \begin{split}
   &
   \Vert u\Vert_{X(\tau)}
   \coloneqq 
  \sum_{i+j\geq r } c_{ij} 
    \Vert \p^i T^j  u\Vert , \qquad \text{ where } c_{ij} \coloneqq \frac{(i+j)^r}{(i+j)!}\tau^{i+j-r }
  \overline{\epsilon}^i \epsilon^j,
    \\&
   \Vert u\Vert_{\tX(\tau)}
   \coloneqq 
   \Vert u\Vert_{X(\tau)}
   + \Vert u\Vert_{H^{r}}
  \\&
   \Vert u\Vert_{Y(\tau)}
   \coloneqq 
   \sum_{ i+j\geq r+1}
    \frac{i+j}{
     \tau
    }c_{ij}
    \Vert \p^i T^j  u\Vert
    \\&
   \Vert u\Vert_{\widetilde{Y}(\tau)}
   \coloneqq 
  \tau  \Vert u\Vert_{Y(\tau)}
   + \Vert u\Vert_{H^{r}} \\&
   \Vert u\Vert_{\oY (\tau)}
   \coloneqq 
  \Vert u\Vert_{Y(\tau)}
   + \Vert u\Vert_{H^{r}},
  \end{split}
   \eqref{EQ19c}
  \end{align}
 \begin{align}
  \begin{split}
   &
   \Vert u\Vert_{X(\tau)}
   \coloneqq 
   \sum_{ |\alpha |\geq r}
    \frac{
     |\alpha |^{r}
        }{
     |\alpha |!
    }
    \tau^{|\alpha |-r}
   \epsilon^{\alpha_2}
    \Vert \p^\alpha u\Vert
    ,
    \\&
   \Vert u\Vert_{\tX(\tau)}
   \coloneqq 
   \Vert u\Vert_{X(\tau)}
   + \Vert u\Vert_{H^{r}}
  ,
  \\&
   \Vert u\Vert_{Y(\tau)}
   \coloneqq 
   \sum_{ |\alpha |\geq r+1}
    \frac{
     |\alpha |^{r+1}
        }{
     |\alpha |!
    }
    \tau^{|\alpha |-r-1}
    \epsilon^{\alpha_2}
    \Vert\p^\alpha u\Vert
    ,
    \\&
   \Vert u\Vert_{\widetilde{ Y}(\tau)}
   \coloneqq 
  \tau  \Vert u\Vert_{Y(\tau)}
   + \Vert u\Vert_{H^{r}}
   ,
    \\&
   \Vert u\Vert_{\overline{Y}(\tau)}
   \coloneqq 
  \Vert u\Vert_{Y(\tau)}
   + \Vert u\Vert_{H^{r}}.
  \end{split}
   \eqref{EQ19}
  \end{align}

\end{center}
\huge
\colb

\fi

%\bibliographystyle{plain}
%\bibliography{literature}

\end{document}